\def\ps@pprintTitle{%
 \let\@oddhead\@empty
 \let\@evenhead\@empty
 \def\@oddfoot{\centerline{\thepage}}%
 \let\@evenfoot\@oddfoot}
\begin{document}

\begin{frontmatter}

\title{Characterising Modal Definability of Team-Based Logics via the Universal Modality
}

\author[jap,hok]{Katsuhiko Sano}
\ead{katsuhiko.sano@gmail.com}

\author[jap,hel,han]{Jonni Virtema}
\ead{jonni.virtema@gmail.com}


\address[jap]{Japan Advanced Institute of Science and Technology}

\address[hok]{Hokkaido University}

\address[hel]{University of Helsinki}

\address[han]{Leibniz University Hannover}

\begin{abstract}
We study model and frame definability of various modal logics. Let $\ML(\uBoxp)$ denote the fragment of modal logic extended with the universal modality in which the universal modality occurs only positively. We show that a class of Kripke models is definable in $\ML(\uBoxp)$ if and only if the class is elementary and closed under disjoint unions and surjective bisimulations.
We also characterise the definability of $\ML(\uBoxp)$ in the spirit of the well-known Goldblatt--Thomason theorem. We show that an elementary class $\cF$ of Kripke frames is definable in $\ML(\uBoxp)$ if and only if $\cF$ is closed under taking generated subframes and bounded morphic images, and reflects ultrafilter extensions and finitely generated subframes. In addition we study frame definability relative to finite transitive frames and give an analogous characterisation of $\ML(\uBoxp)$-definability relative to finite transitive frames. Finally, we initiate the study of model and frame definability in team-based logics. We study (extended) modal dependence logic, (extended) modal inclusion logic, and modal team logic. We establish strict linear hierarchies with respect to model definability and frame definability, respectively. We show that, with respect to model and  frame definability, the before mentioned team-based logics, except modal dependence logic, either coincide with $\ML(\uBoxp)$ or plain modal logic $\ML$. Thus as a corollary we obtain model theoretic characterisation of model and frame definability for the team-based logics.

This article subsumes and extends the conference articles \cite{savi15b} and  \cite{savi16}.
\end{abstract}

\begin{keyword}
Model definability, frame definability, team semantics, universal modality, modal logic.
\MSC[2010]  03B45
\end{keyword}

\end{frontmatter}

\section{Introduction}
Modal logic as a field has progressed far from its philosophical origin, e.g, from the study of the concepts of necessity and possibility. Modern modal logics are integral parts of both theoretical research and real life applications in various scientific fields such as mathematics, artificial intelligence, linguistics, economic game theory, and especially in many subfields of theoretical and applied computer science. Indeed, the general framework of modal logic has been found to be remarkably adaptive.

During the last decade there has been an emergence of vibrant research on logics with \emph{team semantics} in both first-order and modal contexts.
\emph{Team semantics} was introduced by Hodges \cite{Hodges97} in the context of the so-called \emph{independence-friendly logic} of Hintikka and Sandu \cite{hisa89}. The fundamental idea behind team semantics is crisp. The idea is to shift from single assignments to sets of assignments as the satisfying elements of formulas. V\"a\"an\"anen  \cite{va07}  adopted team semantics as the core notion for his \emph{dependence logic}. The syntax of first-order dependence logic extends the syntax of first-order logic by novel atomic formulas called dependence atoms. The intuitive meaning of the dependence atom $\fdep{x_1,\dots,x_n,y}$ is that inside a team the value of $y$ is functionally determined by the values of  $x_1,\dots,x_n$. After the introduction of dependence logic in 2007 the study of related logics with team semantics has bloomed. One of the most important developments in the area of team semantics was the introduction of \emph{independence logic} by Gr\"adel and V\"a\"an\"anen \cite{gradel13} in which dependence atoms of dependence logic are replaced by \emph{independence atoms}. Soon after, Galliani \cite{Galliani:2011} showed that independence atoms can be further analysed, and alternatively expressed, in terms of inclusion and exclusion atoms.

Different dependency notions, such as functional dependence, independence, and inclusion dependence, are important concepts in many areas of science, and especially in statistics and database theory. Using the language of database theory, dependence atoms can be interpreted as \emph{equality generating dependencies} and inclusion dependencies as \emph{tuple generating dependencies}, see e.g., a survey of Kolaitis \cite{kolaitis05} for further information about dependency notions for schema mappings and data exchange. For first works that directly study the connection between dependencies in database theory and dependence logics see the works by Hannula et~al. \cite{hannula15, hks16, hako2016}. Also very recently, a connection between a variant of dependence logic and \emph{constraint satisfaction problems} has been identified by Hella and Kolaitis \cite{hk16}.

Concurrently a vibrant research on modal and propositional logics with team semantics has emerged. In the context of modal logic, any subset of the domain of a Kripke model is called a team. In modal team semantics, formulas are evaluated with respect to team-pointed Kripke models.
The study of \emph{modal dependence logic} was initiated by V\"a\"an\"anen \cite{va08} in 2008. Shortly after, \emph{extended modal dependence logic}  ($\EMDL$) was introduced by Ebbing et~al. \cite{EHMMVV13} and \emph{modal independence logic} by Kontinen et~al. \cite{KMSV14}.
The focus of the research has been in the computational complexity and expressive power. Hella et~al. \cite{HeLuSaVi14} established that exactly the properties of teams that have the so-called \emph{empty team property}, are \emph{downward closed} and closed under the so-called team \emph{$k$-bisimulation}, for some finite $k$, are definable in $\EMDL$. Kontinen et~al. \cite{komu15} have shown that exactly the properties of teams that are closed under the team $k$-bisimulation are definable in the so-called \emph{modal team logic}, whereas Hella and Stumpf established \cite{hest15} that the so-called \emph{extended modal inclusion logic} is characterised by the empty team property, union closure, and closure under team $k$-bisimulation. See the survey \cite{dukovo16} for a detailed exposition on the expressive power and computational complexity of related logics. Whereas the expressive powers of the related logics have been well-studied, the closely related topics of model and frame definability have not been addressed before. Here we mend this shortcoming and all but completely characterise definability of the most studied team-based modal logics.

Modal logic extended with the universal modality ($\ML(\uBox)$) was first formulated by Goranko and Passy \cite{Goranko92}. It extends modal logic by a novel modality $\uBox$, called the universal modality, with the following semantics: the formula $\uBox \varphi$ is true in a point $w$ of a model $\mK$ if $\varphi$ is true in every point $v$ of the model $\mK$. In this article we identify a connection between particular team-based modal logics and a fragment of $\ML(\uBox)$. We will then characterise the fragment of $\ML(\uBox)$  with respect to model and frame definability, and use the connection to team-based logics in order characterise model and frame definability of these team-based modal logics.

The celebrated Goldblatt--Thomason theorem~\cite{GT1975} is a characterisation of modal definability of elementary (i.e., first-order definable) classes of Kripke frames by four frame constructions: generated subframes, disjoint unions, bounded morphic images, and ultrafilter extensions. The theorem states that an elementary class of Kripke frames is definable by a set of modal formulas if and only if the class is closed under taking generated subframes, disjoint unions and bounded morphic images, and reflects ultrafilter extensions. The original proof of Goldblatt and Thomason was algebraic. A model-theoretic version of the proof was later given by van Benthem~\cite{Benthem1993}. From then on, Goldblatt--Thomason -style theorems have been formulated for numerous extensions of modal logic such as modal logic with the universal modality~\cite{Goranko92}, difference logic~\cite{GG1993}, hybrid logic~\cite{tenCate2005}, and graded modal logic~\cite{SM2010}. Also restricted versions of frame definability, such as definability within the class of finite transitive frames  \cite{Benthem1993,GG1993}, have been considered. Also model theoretic characterisations of definable model classes have been given, e.g., for $\ML$ \cite{derijke01}  and $\ML(\uBox)$ \cite{perkov12}. For related work, see also \cite{Perkov2014}.

This paper initiates the study of model and frame definability in the framework of team semantics. Our contribution is two-fold. Firstly, we give Goldblatt--Thomason -style theorems for a fragment of modal logic extended with the universal modality; one restricted to elementary classes and another relative to the class of finite transitive frames. Moreover we give a characterisation of model definability of this logic. Secondly, we show that there is a surprising connection between this fragment and particular team-based modal logics. We also establish surprising strict linear hierarchies with respect to model and frame definability.

Let $\ML(\uBoxp)$ denote the syntactic fragment of $\ML(\uBox)$ in which the universal modality occurs only positively.  We establish that a class $\mathbb{C}$ of Kripke models is definable in $\ML(\uBoxp)$ if and only if $\mathbb{C}$ is closed under surjective bisimulations and ultraproducts, and the complement class $\overline{\mathbb{C}}$ is closed under ultrapowers.
We show that an elementary class of Kripke frames is definable in $\ML(\uBoxp)$ if and only if it is closed under taking generated subframes and bounded morphic images, and reflects ultrafilter extensions and finitely generated subframes.
Moreover we show that a class $\cF$ of finite transitive frames is definable in $\ML(\uBoxp)$ relative to the class of finite transitive frames if and only if $\cF$ is closed under taking generated subframes and bounded morphic images.
Finally we establish that with respect to modal and frame definability a collection of team-based modal logics either coincide with $\ML$ or $\ML(\uBoxp)$. From this connection we obtain characterisations of model and frame definability for each of the related team-based modal logics.
In addition, we obtain strict linear hierarchies for both model and frame definability that include each of the logics studied in this article.

This article is devided in two main parts: Sections \ref{secmlu}--\ref{sec:relmd} concentrate in the study of modal logic with the universal modality and Kripke semantics. In Sections \ref{sec:mlteams}--\ref{sec:mlandum} logics with team-semantics are considered. Moreover in Section \ref{sec:mlandum} these two different formalisms are connected together via model and frame definability.

\section{Modal Logic with Universal Modality}\label{secmlu}

In this section we introduce an extension of the basic modal logic with the universal modality ($\ML(\uBox)$) and present some basic definitions. In addition we present a normal form for the fragment of  $\ML(\uBox)$ in which the universal modality occurs only positively.

\subsection{Syntax and semantics}
In team-based logics it is customary to define the syntax in negation normal form, that is to assume that negations occur only in front of proposition symbols. This is due to the fact that the team semantics negation, that corresponds to the negation used in Kripke semantics, is not the contradictory negation of team semantics. Since in this article we consider extensions of modal logic in the framework of team semantics, we define the syntax of modal logic also in negation normal form.

Let $\Phi$ be a set of atomic propositions. The set of formulas for \emph{modal logic} $\ML(\Phi)$ is generated by the following grammar:
\[
\varphi \ddfn p\mid \neg p \mid (\varphi \wedge \varphi)\mid (\varphi \vee \varphi) \mid \Diamond \varphi \mid \Box \varphi, \quad\text{where $p\in\Phi$.}
\]
The syntax of \emph{modal logic with universal modality} $\ML(\uBox)(\Phi)$ is obtained by extending the syntax of $\ML(\Phi)$ by the grammar rules
\[
\varphi \ddfn \uBox\varphi \mid \uDiamond\varphi.
\]
The syntax of \emph{modal logic with positive universal modality} $\ML(\uBoxp)(\Phi)$ is obtained by extending the syntax of $\ML(\Phi)$ by the grammar rule
\[
\varphi \ddfn \uBox\varphi.
\]
As usual, if the underlying set $\Phi$ of atomic propositions is clear from the context, we drop ``$(\Phi)$'' and just write $\ML$, $\ML(\uBox)$, etc. 
We also use the shorthands $\neg \varphi$, $\varphi\rightarrow\psi$, and $\varphi\leftrightarrow\psi$. By $\neg\varphi$ we denote the formula that can be obtained from $\neg\varphi$ by pushing all negations to the atomic level, and by $\varphi\rightarrow\psi$ and $\varphi\leftrightarrow\psi$,  we denote $(\neg\varphi\lor\psi)$ and $(\varphi\rightarrow\psi)\land(\psi\rightarrow\varphi)$, respectively.

A (Kripke) {\em frame} is a pair $\fF=(W,R)$ where $W$, called the {\em domain} of $\fF$, is a non-empty set and $R\subseteq W\times W$ is a binary relation on $W$. By $\frames$, we denote the class of all frames.  We use $|\fF|$ to denote the domain of the frame $\fF$. Let $\Phi$ be a set of proposition symbols. A (Kripke) {\em $\Phi$-model} is a tuple $\mK=(W,R,V)$, where $(W,R)$ is a frame and $V:\Phi\to \mathcal{P}(W)$ is a valuation of the proposition symbols.  By $\modls(\Phi)$, we denote the class of all $\Phi$-models.
The semantics of modal logic, i.e., the \emph{satisfaction relation} $\mK, w \Vdash \varphi$, is defined via pointed \emph{$\Phi$-models} as usual, see, e.g., \cite{blackburn:2001}. For the universal modality $\uBox$ and its dual $\uDiamond$, we define
\[
\begin{array}{lll}
\mK, w \Vdash \uBox\varphi&\quad \Leftrightarrow \quad & \mK, v \Vdash \varphi, \text{ for every }v\in W,\\
\mK, w \Vdash \uDiamond\varphi &\quad \Leftrightarrow \quad & \mK, v \Vdash \varphi, \text{ for some }v\in W.\\
\end{array}
\]
 We say that formulas $\varphi$ and $\psi$ \emph{are equivalent in Kripke semantics} ($\varphi\equiv_{K}\psi$), if the equivalence $\mK,w \Vdash \varphi \Leftrightarrow\mK,w\Vdash\psi$ holds for every model $\mK=(W,R,V)$ and every $w\in W$. 
If $\varphi\in\ML(\uBox)(\Phi)$ is a Boolean combination of formulas beginning with $\uBox$, we say that $\varphi$ is \emph{closed}.

A formula set $\Gamma$ is \emph{valid in a model} $\mK=(W,R,V)$ (notation: $\mK\Vdash \Gamma$), if $\mK,w\Vdash \varphi$ holds for every $w\in W$ and every $\varphi \in \Gamma$. The set $\Gamma$ is \emph{valid in a class $\cC$ of models}  (written: $\cC\Vdash \Gamma$) if  $\mK\Vdash \Gamma$ for every $\mK\in\cC$. When $\Gamma$ is a singleton $\setof{\varphi}$, we simply write $\mK\Vdash \varphi$ and $\cC\Vdash\varphi$.
Similarly, a formula set $\Gamma$ is \emph{valid in a frame} $\fF=(W,R)$ (notation: $\fF\Vdash \Gamma$), if $\Gamma$ is valid in every model of the form $(\fF,V)$. A set $\Gamma$ of $\LL$-formulas is {\em valid in a class $\cF$ of frames}  (written: $\cF\Vdash \Gamma$) if $\fF \Vdash \Gamma$ for every $\fF \in \cF$. Again when $\Gamma$ is a singleton $\setof{\varphi}$, we simply write $\fF\Vdash \varphi$ and $\cF\Vdash \varphi$.

\subsection{Definability}\label{sec:definability}
Let $\LL(\Phi)$ and $\LL'(\Phi')$ be modal logics such that the validity relation for Kripke models (i.e., $\mK\Vdash \Gamma$) is defined. Given a set $\Gamma$ of $\LL(\Phi)$-formulas, we define 
\[
\modlss(\Gamma) := \inset{\mK \in \modls(\Phi)}{\mK \Vdash \Gamma} \text{ and } \framess(\Gamma) := \inset{\mathfrak{F} \in \frames}{\mathfrak{F} \Vdash \Gamma}. 
\]
We say that $\Gamma$ \emph{defines} a class $\cC$ of models (frames), if $\cC=\modlss(\Gamma)$ ($\cC=\framess(\Gamma)$). When $\Gamma$ is a singleton $\setof{\varphi}$, we simply say that $\varphi$ defines $\cC$.
A class $\cC$ of models (frames) is \emph{$\LL(\Phi)$-definable} if there exists a set $\Gamma$ of $\LL(\Phi)$-formulas such that $\modlss(\Gamma)=\cC$ ($\framess(\Gamma)=\cC$).

We write $\LL(\Phi)\mleq\LL'(\Phi)$, if every $\LL(\Phi)$-definable class of models is also $\LL'(\Phi)$-definable. We write $\LL(\Phi)\meq\LL'(\Phi)$, if both $\LL(\Phi)\mleq\LL'(\Phi)$ and $\LL'(\Phi)\mleq\LL(\Phi)$ hold, and write $\LL(\Phi) \mle \LL'(\Phi)$, if $\LL(\Phi)\mleq\LL'(\Phi)$ but $\LL'(\Phi)\not\mleq\LL(\Phi)$. Analogously, we write $\LL(\Phi)\fleq\LL'(\Phi')$, if every $\LL(\Phi)$-definable class of frames is also $\LL'(\Phi')$-definable. We write $\LL(\Phi)\feq\LL'(\Phi')$, if both $\LL(\Phi)\fleq\LL'(\Phi')$ and $\LL'(\Phi')\fleq\LL(\Phi)$ hold, and write $\LL(\Phi) \fle \LL'(\Phi')$ if $\LL(\Phi)\fleq\LL'(\Phi')$ but $\LL'(\Phi')\not\fleq\LL(\Phi)$. 

A class $\cC$ of $\Phi$-models is called \emph{elementary} if there exists a set of first-order sentences with equality of the vocabulary  $\Phi\cup\{R \}$ that defines $\cC$. A class $\fF$ of frames is called \emph{elementary} if there exists a set of first-order sentences with equality of the vocabulary  $\{R \}$ that defines $\cC$. 

It is well-known that, via the so-called \emph{standard translation}, formulas of modal logic can be translated to formulas of first-order logic with one free variable. This translation also geneneralises to $\ML(\uBox)$. Thus it follows that every  $\ML(\uBox)$-definable class of models is elementary. However this does not hold for classes of frames for the obvious reason; in frame definability the univeral quantification of valuations corresponds to quantification of sets and thus the corresponding translation is to monadic second-order logic. 
\begin{definition}[Standard translation]\label{def:st}
Let $x$ be a first-order variable. The \emph{standard translation} $\ST_x$ that maps formulas of $\ML(\uBox)(\Phi)$ to formulas of $\FO(\{R\}\cup \Phi)$ is defined as follows:
\begin{align*}
\ST_x(p) \,=\,& P(x),\\ 
\ST_x(\neg p) \,=\,& \neg P(x),\\ 
\ST_x(\varphi \lor \psi) \,=\,& \ST_x(\varphi) \lor  \ST_x(\psi), \\
\ST_x(\varphi \land \psi) \,=\,& \ST_x(\varphi) \land  \ST_x(\psi), \\
\ST_x(\Diamond\varphi) \,=\,& \exists y \big(R(x,y) \land \ST_y(\varphi)\big), \\
\ST_x(\Box\varphi) \,=\,& \forall y \big(R(x,y) \rightarrow \ST_y(\varphi)\big), \\
\ST_x(\uDiamond\varphi) \,=\,& \exists x \ST_x(\varphi), \\ 
\ST_x(\uBox\varphi) \,=\,& \forall x \ST_x(\varphi),
\end{align*}
where $y$ is a fresh variable. 
\end{definition}
The proof of the following proposition is self-evident; for basic modal logic $\ML$ see \cite[Proposition 2.47]{blackburn:2001}.
\begin{proposition} \label{prop:st}
Let $\varphi$ be an $\ML(\uBox)$-formula.
\begin{enumerate}
\item For every $\mK$ and every point $w$ of $\mK$: $\mK,w \Vdash \varphi$ iff  $\mK \models_{\FO} \ST_x(\varphi)[w]$.
\item For every $\mK$: $\mK \Vdash \varphi$ iff  $\mK \models_{\FO} \forall x \ST_x(\varphi)$.
\end{enumerate}
Here $\models_{\FO}$ denotes the satisfaction relation of first-order logic.
\end{proposition}

\subsection{Normal Form}
We will next define a normal form for $\ML(\uBoxp)$. This normal form is a modification of the normal form for $\ML(\uBox)$ by Goranko and Passy in \cite{Goranko92}.

\begin{definition} 
%
\begin{enumerate}
\item[$(\mathrm{i})$] A formula $\varphi$ is a \emph{disjunctive $\uBox$-clause} if there exists a natural number $n\in\omega$ and formulas $\psi, \psi_1,\dots,\psi_n\in \ML$ such that $\varphi$ $=$ $\psi \vee \uBox\psi_1\vee \dots \vee \uBox\psi_n$. 
\item[$(\mathrm{ii})$] A formula $\varphi$ is in \emph{conjunctive $\uBox$-form}  if $\varphi$ is a conjunction of disjunctive $\uBox$-clauses.
\item[$(\mathrm{i})$] A formula $\varphi$ is a \emph{conjunctive $\uBox$-clause} if there exists formulas $\psi, \theta\in \ML$ such that $\varphi = \psi \land \uBox\theta$. 
\item[$(\mathrm{ii})$] A formula $\varphi$ is in \emph{disjunctive $\uBox$-form} if $\varphi$ is a disjunction of conjunctive $\uBox$-clauses.
\item[$(\mathrm{iii})$] A formula $\varphi$ is in \emph{$\uBox$-form} if $\varphi$ is either in conjunctive $\uBox$-form or in disjunctive $\uBox$-form.
\end{enumerate}
\end{definition}
It is easy to show that for each $\ML(\uBoxp)$-formula in conjunctive $\uBox$-form there exists an equivalent $\ML(\uBoxp)$-formula in disjunctive $\uBox$-form, and vice versa.

Recall that $\varphi\in\ML(\uBox)(\Phi)$ is {\em closed} if it is a Boolean combination of formulae beginning with $\uBox$. 
\begin{proposition}
\label{equivalences}
Let $\varphi, \psi \in \ML(\uBox)$ such that $\psi$ is closed. Then, 
\begin{multicols}{2}
\begin{enumerate}
\item $\Box(\varphi \vee \psi) \equiv_{K} (\Box\varphi \vee \psi)$,\label{ecase1} 
\item $\Diamond(\varphi \wedge \psi) \equiv_{K} (\Diamond\varphi \wedge \psi)$,\label{ecase2}
\item $\uBox(\varphi \vee \psi) \equiv_{K} (\uBox\varphi \vee \psi)$.\label{ecase3}
\end{enumerate}
\end{multicols}
\end{proposition}
\begin{proof}
Cases \ref{ecase1} and \ref{ecase3} follow from \cite[Proposition 3.6]{Goranko92}. Case \ref{ecase2} is completely analogous to case \ref{ecase1}.
\end{proof}

\begin{theorem}
\label{A+form}
For each $\ML(\uBoxp)$-formula $\varphi$, there exists an $\ML(\uBoxp)$-formula $\psi$ in $\uBox$-form such that $\varphi\equiv_K\psi$. 
\end{theorem}
\begin{proof}
The proof is done by induction on $\varphi$. The cases for literals and connectives are trivial. 
As for the case $\varphi = \Box \psi$, we proceed as follows. By induction hypothesis there exists a conjunctive $\uBox$-form $\bigwedge_{i \in I} \psi_{i}$, where each $\psi_i$ is a disjunctive $\uBox$-clause, such that $\bigwedge_{i \in I} \psi_{i} \equiv_{K}\psi$. By the semantics of $\Box$, we then have that
\[
\Box\psi \equiv_{K} \Box \bigwedge_{i \in I} \psi_{i} \equiv_{K} \bigwedge_{i \in I} \Box \psi_{i}.
\]
Now since each $\psi_i$ is a disjunctive $\uBox$-clause, it follows from case \ref{ecase1} of Proposition \ref{equivalences} that, for each $i \in I$, the formula $\Box\psi_i$ is equivalent to some disjunctive $\uBox$-clause $\psi_i'$. Thus $\bigwedge_{i \in I} \psi_{i}'$ is a conjunctive $\uBox$-form that is equivalent to $\Box\psi$. 

The proof for the case of $\uBox\varphi$ is otherwise the same as the proof for the case $\Box\varphi$, but instead of item \ref{ecase1} of Proposition \ref{equivalences}, item \ref{ecase3} is used. The proof for the case $\Diamond\varphi$ is likewise analogous to that of $\Box\varphi$. The proof uses a disjunctive $\uBox$-form instead of the conjunctive one and item \ref{ecase2} of Proposition \ref{equivalences} instead of item \ref{ecase1}.
\end{proof}

\section{Definability in Modal Logics with Universal modality}
In this section we characterise the definability of the logics introduced in Section \ref{secmlu} with respect to (non-pointed) models. We start by introducing the well-known concepts; disjoint unions and bisimulations.

\begin{definition}[Disjoint Union]
Let $\inset{\mK_{i}}{i \in I}$ be a pairwise disjoint family of $\Phi$-models, where $\mK_{i}$ = $\tuple{W_{i},R_{i}, V_i}$. The {\em disjoint union} $\biguplus_{i \in I} \mK_{i}$ = $\tuple{W,R,V}$ of $\inset{\mK_{i}}{i \in I}$ is defined by $W = \bigcup_{i \in I} W_{i}$, $R = \bigcup_{i \in I} R_{i}$, and $V(p)= \bigcup_{i \in I} V_{i}(p)$, for each $p\in\Phi$. 
\end{definition}

\begin{definition}[Bisimulation]\label{def:bisim}
Let $\mK=(W,R,V)$ and $\mK'=(W',R',V')$ be $\Phi$-models.
A nonempty relation $Z  \subseteq{W\times W'}$ is called a \emph{bisimulation} if for each $(w,w')\in Z$ it holds that
\begin{enumerate}
\item $\mK,w \Vdash p \Leftrightarrow \mK',w' \Vdash p$, for each $p\in\Phi$,
\item for each $v\in W$ s.t. $wRv$ there exists $v'\in W'$ s.t. $w'R'v'$ and $vZv'$,
\item for each $v'\in W'$ s.t. $w'R'v'$ there exists $v\in W$ s.t. $wRv$ and $vZv'$.
\end{enumerate}
If the domain of $Z$ is $W$, we call $Z$ total, and if the range of $Z$ is $W$', we say that $Z$ is surjective. 
\end{definition}

It is well-known that for pointed models and basic modal logic $\ML$ bisimulation implies modal equivalence. Moreover with respect to modal definability, we have the following characterisation. Ultraproducts and ultrapowers are standard notions of first-order model theory, see e.g., the book of Chang and Keisler \cite{CK1990}. In this paper these notions are used in order to build $\omega$-saturated and elementary equivalent models (again standard notions of first-order model theory, see e.g., Chang and Keisler)  from given Kripke models.
\begin{theorem}[\citep{derijke01,perkov12}]\label{MLmodels}
Let $\cC$ be a class of Kripke models. The following equivalences hold:
\begin{enumerate}
\item The class $\cC$ is definable in $\ML$ if and only if $\cC$ is closed under surjective bisimulations, disjoint unions and ultraproducts, and $\overline{\cC}$ is closed under ultrapowers.
\item The class $\cC$ is definable in $\ML(\uBox)$ if and only if $\cC$ is closed under total surjective bisimulations and ultraproducts, and $\overline{\cC}$ is closed under ultrapowers.
\end{enumerate}
\end{theorem}
It is well-known (see, e.g., \cite{CK1990}) that a class of models $\cC$ is elementary if and only if it is closed under isomorphisms and ultraproducts, while its complement is closed under ultrapowers. Thus the above theorem may be rewritten as follows:
\begin{corollary}\label{cor:oldelementary}
Let $\cC$ be a class of Kripke models. The following equivalences hold:
\begin{enumerate}
\item The class $\cC$ is definable in $\ML$ if and only if $\cC$ is elementary and closed under surjective bisimulations and disjoint unions.
\item The class $\cC$ is definable in $\ML(\uBox)$ if and only if $\cC$ is elementary and closed under total surjective bisimulations.
\end{enumerate}
\end{corollary}

We will next establish corresponding characterisations for $\ML(\uBoxp)$.
Recall that a closed disjunctive $\uBox$-clause is a formula of the form $\bigvee_{i \in I} \uBox \varphi_{i}$, where, for each $i\in I$, $\varphi_{i} \in \ML$. 
\begin{definition}
By $\bigvee \uBox\ML$ we denote the set of all closed disjunctive $\uBox$-clauses.
\end{definition}

\begin{lemma}
\label{cor:A+form}
For each $\ML(\uBoxp)$-formula $\varphi$, there exists a finite set $\Gamma$ of closed disjunctive $\uBox$-clauses such that 
$\mathfrak{M} \Vdash \varphi$ iff $\mathfrak{M} \Vdash \Gamma$, for every model $\mathfrak{M}$. 
\end{lemma}
\begin{proof}
Let $\varphi$ be an $\ML(\uBoxp)$-formula. 
By Theorem \ref{A+form}, we may assume that $\varphi$ is a conjunctive $\uBox$-form $\bigwedge_{i \in I} \psi_{i}$, where each $\psi_{i}$ := $\gamma_{i} \lor \bigvee_{ j \in J_{i}} \uBox \delta_{j}$ is a disjunctive $\uBox$-clause.
By item \ref{ecase3} of Proposition \ref{equivalences}, for each $i\in I$, $\uBox \psi_{i}$ is equivalent to the closed disjunctive  $\uBox$-clause $\psi_{i}'$ := $\uBox \gamma_{i} \lor \bigvee_{ j \in J_{i}} \uBox \delta_{j}$.
Thus, for every model $\mK$,
\[
\mK\Vdash {\bigwedge}_{i \in I} \psi_{i} \Leftrightarrow \mK\Vdash \{\psi_i \mid i\in I\} \Leftrightarrow \mK\Vdash \{\uBox\psi_i \mid i\in I\} \Leftrightarrow \mK\Vdash \{\psi_i' \mid i\in I\}.
\]
\end{proof}

\begin{proposition}
\label{prop:mlup_eq_vuml_m}
A class $\cC$ of Kripke models is definable in
$\ML(\uBoxp)$ if only if $\cC$ is definable in $\bigvee \uBox\ML$. 
\end{proposition}
\begin{proof}
The direction $\bigvee \uBox\ML$ $\mleq$ $\ML(\uBoxp)$ is trivial. We will establish that $\ML(\uBoxp)\mleq\bigvee \uBox\ML$. 
Consider any $\ML(\uBoxp)$-definable class of models $\mathbb{C}$. Let $\Gamma$ be a set of $\ML(\uBoxp)$ formulas that defines $\mathbb{C}$. 
By Lemma \ref{cor:A+form}, for each $\varphi \in \Gamma$, there is a finite set $\Delta_{\varphi}$ of closed disjunctive $\uBox$-clauses such that $\mathfrak{M} \Vdash \varphi$ iff $\mathfrak{M} \Vdash \Delta_{\varphi}$, for every Kripke model $\mathfrak{M}$. It follows that $\mathfrak{M} \Vdash \Gamma$ iff $\mathfrak{M} \Vdash \bigcup_{\varphi \in \Gamma} \Delta_{\varphi}$, for every Kripke model $\mathfrak{M}$. Therefore $\bigcup_{\varphi \in \Gamma} \Delta_{\varphi}$ defines $\mathbb{C}$ as desired. 
\end{proof}

\begin{proposition}\label{prop:closedsurjective}
Let $\mK$ and $\mK'$ be Kripke models such that there is a surjective bisimulation from $\mK$ to $\mK'$, and let $\varphi$ be an $\ML(\uBoxp)$-formula. If $\mK\Vdash \varphi$ then $\mK'\Vdash \varphi$.
\end{proposition}
\begin{proof}
Let $Z\subseteq W\times W'$ be a surjective bisimulation. We show by structural induction that for every $\varphi\in\ML(\uBoxp)$ and $(w,w')\in Z$, it holds that, if $\mK,w\Vdash \varphi$ then $\mK',w'\Vdash \varphi$, from which the claim follows. The cases for (negated) propositional symbols, Boolean connectives, and the modalities $\Diamond$ and $\Box$ are standard. We show the case for $\uBox$. Assume that $\mK,w\Vdash \uBox\varphi$. Now, for every $v\in \dom{Z}$, it holds that $\mK,v\Vdash \varphi$. Thus, by induction hypothesis, $\mK',v'\Vdash \varphi$, for every $v'\in \ran{Z}=W'$. Thus $\mK',w'\Vdash \uBox\varphi$.
\end{proof}

\begin{lemma}[\cite{perkov12}]\label{lemma:surjection}
Let $\mK$ and $\mK'$ be $\omega$-saturated Kripke models. Assume that, for every $\varphi\in\ML$, $\mK\Vdash\varphi$ implies $\mK'\Vdash\varphi$. Then there exists a surjective bisimulation from $\mK$ to $\mK'$.
\end{lemma}

The following result was essentially stated in the conclusion of \cite{derijke01}. We present a detailed proof.
\begin{theorem}
A class $\mathbb{C}$ of models is definable in $\bigvee \uBox\ML$ if and only if $\mathbb{C}$ is closed under surjective bisimulations and ultraproducts, and $\overline{\mathbb{C}}$ is closed under ultrapowers.
\end{theorem}

\begin{proof}
Let $\mathbb{C}$ be a class of models definable in $\bigvee \uBox\ML$. Via standard translation (Proposition \ref{prop:st}), we get that $\mathbb{C}$ is elementary and thus $\mathbb{C}$ is closed under ultraproducts and $\overline{\mathbb{C}}$ is closed under ultrapowers. By Propositions \ref{prop:mlup_eq_vuml_m} and \ref{prop:closedsurjective}, $\mathbb{C}$ is closed under surjective bisimulations.

Assume that $\mathbb{C}$ is closed under surjective bisimulations and ultraproducts, and that $\overline{\mathbb{C}}$ is closed under ultrapowers. Define
\[
S := \{ \varphi \in \bigvee \uBox\ML \mid   \mathbb{C} \Vdash \varphi   \}
\]
We will show that $S$ defines $\mathbb{C}$. Clearly $\mathbb{C}\subseteq\modlss(S)$, thus we show the converse. Let $\mK$ be a model such that $\mK\Vdash S$. Let $U$ be some ultrafilter over $\mathbb{N}$ that includes all cofinite subsets of $\mathbb{N}$ (for the existence of such ultrafilter, the reader is referred to, e.g., ~\cite[Proposition 3.3.6]{CK1990}) and put $\mK'=\Pi_{U}\mK$. Clearly $U$ is a countably incomplete ultrafilter over $\mathbb{N}$ (an ultrafilter is countably incomplete if it is not closed under countable intersections). Note that $\mK$ and $\mK'$ are elementary equivalent (see, e.g., \cite[Corollary A.20]{blackburn:2001}), and thus via standard translation (Proposition \ref{prop:st}) $\mK'\Vdash S$. Define
\[
\Delta := \{ \varphi \in \bigvee \uBox\ML \mid   \mK' \Vdash \varphi   \}.
\]
Define $\overline{\Delta}$ := $\bigvee\uBox\ML \setminus \Delta$. For each $\gamma \in \overline{\Delta}$, $\gamma$ is falsifiable in $\mathbb{C}$, i.e., there exists a model $\mK \in \mathbb{C}$ such that $\mK \not\Vdash \gamma$. 
For if not, $\gamma \in \overline{\Delta}$ is in $S$ (and thus in $\Delta$), a contradiction.
Since the logics we consider have only countably many formulas, we may write $\overline{\Delta} = \{\gamma_1, \dots, \gamma_k, \dots  \}$. Since $\overline{\Delta}$ is closed under disjunctions, for each $k\in\mathbb{N}$, $\gamma_1\lor \cdots \lor \gamma_k$ is falsifiable in $\mathbb{C}$. So, for each $k \in \mathbb{N}$, let us fix a model $\mK'_k \in \mathbb{C}$ such that $\mK'_k \not\Vdash \gamma_1\lor \cdots \lor \gamma_k$. Note that for every formula $\varphi \in \bigvee \uBox\ML$ and Kripke model $\mN$, $\mN\not\Vdash \varphi$ iff  $\mN\Vdash \neg\varphi$. Thus $\mK'_k \Vdash \neg (\gamma_1\lor \cdots \lor \gamma_k)$.  Recall that $U$ contains all cofinite subsets of $\mathbb{N}$. It is easy to check that, by \L{}o\'s's theorem (see, e.g., \cite[Theorem A.19]{blackburn:2001}), $\Pi_U \mK'_k\Vdash \neg \gamma$, for each $\gamma \in \overline{\Delta}$. Thus $\Pi_U \mK'_k\not\Vdash \gamma$, for each $\gamma \in \overline{\Delta}$. 

Recall that $U$ is countably incomplete. By \cite[Theorem 6.1.1]{CK1990} it follows that $\Pi_U \mK'_k$ and $\mK'=\Pi_{U}\mK$ are $\omega$-saturated.
Next we show that, for every $\psi\in\ML$, if $\Pi_U \mK'_k \Vdash \psi$ then $\mK' \Vdash \psi$. By Lemma \ref{lemma:surjection} it then follows that there exists a surjective bisimulation from $\Pi_U \mK'_k$ to $\mK'$.  Assume that $\mK' \not\Vdash \psi$. Thus $\mK' \not\Vdash \uBox \psi$. Now clearly $\uBox \psi \in \overline{\Delta}$, and thus $\Pi_U \mK'_k\not\Vdash \uBox \psi$. It then follows that $\Pi_U \mK'_k \not\Vdash \psi$.

We are now ready to finalise the proof. Recall that, for each $k\in\mathbb{N}$, the model $\mK'_k\in \mathbb{C}$. By assumption $\mathbb{C}$ is closed under ultraproducts and thus $\Pi_U \mK'_k \in \mathbb{C}$. Now since $\mathbb{C}$ is closed under surjective bisimulations, we obtain that $\mK'\in\mathbb{C}$. Recall that $\mK'=\Pi_{U}\mK$ and that $\overline{\mathbb{C}}$ is closed under ultrapowers, thus we conclude that $\mK\in\mathbb{C}$.
\end{proof}

Now together with Proposition \ref{prop:mlup_eq_vuml_m}, we obtain the following corollary.
\begin{corollary}\label{MLupmodels}
A class $\mathbb{C}$ of models is definable in $\ML(\uBoxp)$ if and only if $\mathbb{C}$ is closed under surjective bisimulations and ultraproducts, and $\overline{\mathbb{C}}$ is closed under ultrapowers.
\end{corollary}
Analogously to Corollary \ref{cor:oldelementary}, we obtain the following:
\begin{corollary}\label{MLupmodelsb}
A class $\mathbb{C}$ of models is definable in $\ML(\uBoxp)$ if and only if $\mathbb{C}$ is elementary and closed under surjective bisimulations.
\end{corollary}
With the help of the characterisations above, it is easy to show that the following strict hierarchy follows.
\begin{proposition}\label{ummorder}
$\ML\mle\ML(\uBoxp)\mle\ML(\uBox)$.
\end{proposition}
\begin{proof}
$\ML\mle\ML(\uBoxp)$: Let $\cC$ be the class of $\{p\}$-models that is defined by the $\ML(\uBoxp)$-formula $\uBox p \lor \uBox \neg p$. It is self-evident that $\cC$ is not closed under disjoint unions. Thus, by \Cref{MLmodels}, $\cC$ is not $\ML$-definable.

$\ML(\uBoxp)\mle\ML(\uBox)$:  Let $\cC$ be the class of $\{p\}$-models that is defined by the $\ML(\uBox)$-formula $\uDiamond p$. It is self-evident that $\cC$ is not closed under surjective bisimulations. Thus, by \Cref{MLupmodels}, $\cC$ is not $\ML(\uBoxp)$-definable.
\end{proof}

\section{Modal Frame Definability}\label{secmlframe}
%


In this section we compare $\ML$, $\ML(\uBoxp)$, and $\ML(\uBox)$ with respect to frame definability.
It is easy to see that $\ML \fleq \ML(\uBoxp) \fleq \ML(\uBox)$. To show that the two occurrences of $\fleq$ here are strict, let us introduce two frame constructions.

\begin{definition}[Disjoint Unions]
Let $\inset{\fF_{i}}{i \in I}$ be a pairwise disjoint family of frames, where $\fF_{i}$ = $\tuple{W_{i},R_{i}}$. The {\em disjoint union} $\biguplus_{i \in I} \fF_{i}$ = $\tuple{W,R}$ of $\inset{\fF_{i}}{i \in I}$ is defined by $W = \bigcup_{i \in I} W_{i}$ and $R = \bigcup_{i \in I} R_{i}$. 
\end{definition}

\begin{definition}[Generated Subframes]
Given any two frames $\fF = \tuple{W,R}$ and $\fF = \tuple{W',R'}$, $\fF'$ is a {\em{generated subframe}} of $\fF$ if $(\mathrm{i})$ $W' \subseteq W$, $(\mathrm{ii})$ $R'$ = $R \cap (W')^{2}$, $(\mathrm{iii})$  $w'Rv'$ implies $v'\in W'$, for every $w' \in W'$. We say that $\fF'$ is the {\em{generated subframe}} of $\fF$ by $X \subseteq |\fF|$ (notation: $\fF_{X}$) if $\fF'$ is the smallest generated subframe of $\fF$ whose domain contains $X$. $\fF'$ is a {\em finitely generated subframe} of $\fF$ if there is a finite set $X \subseteq |\fF|$ such that $\fF'$ is $\fF_{X}$. 
\end{definition}

It is well-known that every $\ML$-definable frame class is closed under taking both disjoint unions and generated subframes (see~\cite[Theorem 3.14 (i), (ii)]{blackburn:2001}). However this is not the case for every $\ML(\uBox)$-definable nor every $\ML(\uBoxp)$-definable class; see the following example.
\begin{example}\label{example1}
Consider the following examples from~\cite[p.14]{Goranko92}: the formula $\neg p \lor \uBox p$ defines the class $\{(W,R)\in \frames\mid \lvert W \rvert = 1\}$, whereas the formula $\uDiamond \Diamond (p\lor\neg p)$ defines the class $\{(W,R)\in \frames\mid R \neq \emptyset\}$. Clearly, the former is not closed under taking disjoint unions, and the latter is not closed under taking generated subframes. Note that both of the classes above are elementary.
\end{example}
The above example shows that there exists an $\ML(\uBoxp)$-definable class that is not closed under taking disjoint unions and that there exists an $\ML(\uBox)$-definable class that is not closed under taking generated subframes. Thus we obtain that  $\ML \fle \ML(\uBoxp)$. Next we will next establish that every $\ML(\uBoxp)$-definable frame class is closed under taking generated subframes. From this we get that $\ML(\uBoxp) \fle \ML(\uBox)$.

First note that the following result follows directly from Proposition \ref{prop:mlup_eq_vuml_m}.
\begin{proposition}
\label{prop:mlup_eq_vuml}
A class $\cF$ of Kripke frames is definable in
$\ML(\uBoxp)$ if only if it is definable in $\bigvee \uBox\ML$. 
\end{proposition}
%

\begin{proposition}
\label{prop:gen_frames}
Let $\fF$ be a frame and $\varphi\in \bigvee \uBox\ML$. 
If $\fF \Vdash \varphi$, then $\fG \Vdash \varphi$ for all generated subframes $\fG$ of $\fF$.
\end{proposition}
\begin{proof}
Fix any generated subframe $\fG$ of a frame $\fF$ and put $\varphi$ := $\bigvee_{i \in I} \uBox \psi_{i}$.
Suppose that $\fF \Vdash \varphi$. 
To show $\fG \Vdash \varphi$, fix any valuation $V$ and any state $w$ in $\fG$. 
We show that $(\fG,V),w \Vdash \uBox \psi_{i}$ for some $i \in I$. 
Since we can regard $V$ as a valuation on $\fF$, $(\fF,V), w \Vdash \bigvee_{i \in I} \uBox \psi_{i}$. Thus there is some $i \in I$ such that 
$(\fF,V), u \Vdash \psi_{i}$, for every $u\in|\fF|$. 
Fix such $i \in I$. 
Since $\psi_{i}$ is in $\ML$ and the satisfaction of $\ML$ is invariant under taking generated submodels (cf.~\cite[Proposition 2.6]{blackburn:2001}), 
$(\fG,V), u \Vdash \psi_{i}$ for every $u\in\fG$. 
Therefore, $(\fG,V), w \Vdash \uBox \psi_{i}$, as desired.
\end{proof}

The following proposition follows directly by Propositions \ref{prop:mlup_eq_vuml} and \ref{prop:gen_frames}.
%
\begin{proposition}\label{uboxgen}
Every $\ML(\uBoxp)$-definable frame class is closed under taking generated subframes.
\end{proposition}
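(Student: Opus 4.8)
The plan is to reduce the claim, via the normal form of Theorem~\ref{A+form}, to the case of a single disjunctive $\uBox$-clause, and then to transfer its validity to generated subframes by exploiting the standard invariance of $\ML$ under generated submodels. Concretely, suppose $\cF = \mathbb{FR}(\Gamma)$ for a set $\Gamma$ of $\ML(\uBoxp)$-formulae, let $\fF = (W,R) \in \cF$, and let $\fF' = (W',R')$ be a generated subframe of $\fF$; I must show $\fF' \in \cF$, i.e.\ $\fF' \Vdash \Gamma$. By Theorem~\ref{A+form} (together with the noted interconvertibility of the two $\uBox$-forms) I may replace each $\gamma \in \Gamma$ by a $\equiv_K$-equivalent formula in conjunctive $\uBox$-form; since $\equiv_K$ preserves $\mathbb{FR}$ and frame validity distributes over conjunction, I may then split the conjunctions and assume that $\Gamma$ consists solely of disjunctive $\uBox$-clauses $\gamma = \psi \vee \uBox\psi_1 \vee \dots \vee \uBox\psi_n$ with $\psi,\psi_1,\dots,\psi_n \in \ML$. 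It therefore suffices to show $\fF' \Vdash \gamma$ for one such clause.

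First I would fix an arbitrary valuation $V'$ on $\fF'$ and a point $w' \in W'$, and extend $V'$ to a valuation $V$ on $\fF$ by setting $V(p) = V'(p)$ for every $p$ (legitimate since $V'(p) \subseteq W' \subseteq W$), so that $(\fF',V')$ is a generated submodel of $(\fF,V)$. Since $\fF \Vdash \gamma$, the clause holds at $w'$ under $V$, so at least one disjunct does. If $(\fF,V),w' \Vdash \psi$, then the well-known invariance of $\ML$ under generated submodels (cf.\ \cite{Blackburn:2001}) gives $(\fF',V'),w' \Vdash \psi$. If instead $(\fF,V),w' \Vdash \uBox\psi_i$ for some $i$, then $(\fF,V),v \Vdash \psi_i$ for all $v \in W$, in particular for all $v \in W'$, and the same invariance yields $(\fF',V'),v \Vdash \psi_i$ for all $v \in W'$, whence $(\fF',V'),w' \Vdash \uBox\psi_i$. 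In either case $(\fF',V'),w' \Vdash \gamma$, and as $V'$ and $w'$ were arbitrary we conclude $\fF' \Vdash \gamma$, hence $\fF' \in \cF$.

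The step I expect to be the genuine obstacle is the reduction in the first paragraph: what makes the transfer argument work is precisely that the normal form confines $\uBox$ to top-level positions applied to $\ML$-formulae, so that no universal modality sits inside a $\Box$, a $\Diamond$, or another $\uBox$, where a generated subframe could disrupt its truth. Once $\uBox$ is so confined, the closure is routine. An equivalent and slightly slicker route, presumably the content of Proposition~\ref{prop:mlup_eq_vuml}, is to first observe that at the level of frame validity the local disjunct can be absorbed under a box, namely for every frame $\mathfrak{G}$,
\[
\mathfrak{G} \Vdash \psi \vee \uBox\psi_1 \vee \dots \vee \uBox\psi_n \iff \mathfrak{G} \Vdash \uBox\psi \vee \uBox\psi_1 \vee \dots \vee \uBox\psi_n,
\]
so that $\ML(\uBoxp) \feq \bigvee\uBox\ML$; the nontrivial direction uses that each $\uBox\psi_i$ is closed, hence false everywhere if false anywhere, so a failure of $\uBox\psi$ would exhibit a point refuting the whole clause. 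One may then argue closure directly for closed disjunctive $\uBox$-clauses (Proposition~\ref{prop:gen_frames}), but this absorption is not strictly necessary for the argument above.
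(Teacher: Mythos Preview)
Your proof is correct and follows essentially the same route as the paper: reduce via the $\uBox$-normal form to (conjunctions of) disjunctive $\uBox$-clauses, then transfer validity to a generated subframe by extending the valuation and invoking $\ML$-invariance under generated submodels. The only cosmetic difference is that the paper first absorbs the local disjunct $\psi$ under a $\uBox$ (your ``slicker route'', i.e.\ Proposition~\ref{prop:mlup_eq_vuml}) and then applies Proposition~\ref{prop:gen_frames} to closed clauses, whereas you handle the $\psi$-disjunct directly by a separate case; both variants are equally valid and you correctly identify the absorption argument as well.
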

Now recall that, by Example \ref{example1}, $\ML(\uBoxp)$ is not closed under taking disjoint unions and $\ML(\uBox)$ is not closed under generated submodels. Furthermore recall that, by Proposition \ref{prop:mlup_eq_vuml}, $\ML(\uBoxp) \feq \bigvee \uBox\ML$. The following strict hierarchy follows.
\begin{proposition}
\label{prop:frame_def_ordering}
$\ML \fle \ML(\uBoxp) \feq \bigvee \uBox\ML \fle \ML(\uBox)$. 
Moreover, the same holds when we restrict ourselves to elementary frame classes.
\end{proposition}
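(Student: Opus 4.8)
The plan is as follows. The non-strict chain $\ML \fleq \ML(\uBoxp) \fleq \ML(\uBox)$ is already in hand, and the middle equivalence $\ML(\uBoxp) \feq \bigvee \uBox\ML$ is exactly Proposition \ref{prop:mlup_eq_vuml}. Hence it remains only to upgrade the two occurrences of $\fleq$ to $\fle$, that is, to establish $\ML(\uBoxp) \not\fleq \ML$ and $\ML(\uBox) \not\fleq \ML(\uBoxp)$. For both separations I would use closure under a frame construction as a separating invariant: if every $\LL$-definable class is closed under some construction while some $\LL'$-definable class is not, then that single $\LL'$-definable class witnesses $\LL' \not\fleq \LL$.

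For $\ML(\uBoxp) \not\fleq \ML$ I would invoke the $\ML(\uBoxp)$-definable class $\cC_1$ supplied by Example \ref{example1} that is not closed under disjoint unions. Since every $\ML$-definable frame class is closed under taking disjoint unions (\cite[Theorem 3.14]{Blackburn:2001}), the class $\cC_1$ cannot be $\ML$-definable; yet it is $\ML(\uBoxp)$-definable by construction, and this yields $\ML \fle \ML(\uBoxp)$. Dually, for $\ML(\uBox) \not\fleq \ML(\uBoxp)$ I would invoke the $\ML(\uBox)$-definable class $\cC_2$ from Example \ref{example1} that is not closed under generated subframes. By Proposition \ref{uboxgen}, every $\ML(\uBoxp)$-definable frame class is closed under taking generated subframes, so $\cC_2$ is not $\ML(\uBoxp)$-definable while being $\ML(\uBox)$-definable; this gives $\ML(\uBoxp) \fle \ML(\uBox)$. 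Chaining the two separations with the equivalence from Proposition \ref{prop:mlup_eq_vuml} then establishes the displayed hierarchy.

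For the ``moreover'' clause I would verify that the two witnessing classes of Example \ref{example1} are themselves elementary, i.e., first-order definable in the vocabulary $\{R\}$; the frame conditions involved are of the simple kind expressible by sentences such as $\forall x \forall y\, (x = y)$ or $\exists x \forall y\, \neg R(x,y)$. Because the two closure properties used above hold for \emph{arbitrary} definable classes (with no elementariness hypothesis), observing that $\cC_1$ and $\cC_2$ are elementary is all that is required: the very same classes then witness both strict separations within the realm of elementary frame classes, so the restricted statement follows with no additional argument.

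I expect the genuine content, and thus the main obstacle, to reside entirely in Example \ref{example1} rather than in the proposition proper: the delicate part is exhibiting frames and formulae that simultaneously realize the desired (non-)closure behaviour under disjoint unions and under generated subframes \emph{and} remain first-order definable. Once that example is in place, the proposition reduces to the bookkeeping sketched above, combining the known closure of $\ML$ under disjoint unions, the closure of $\ML(\uBoxp)$ under generated subframes (Proposition \ref{uboxgen}), and the equivalence $\ML(\uBoxp) \feq \bigvee\uBox\ML$.
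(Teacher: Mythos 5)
Your proposal is correct and follows essentially the same route as the paper: the paper likewise upgrades the easy chain $\ML \fleq \ML(\uBoxp) \fleq \ML(\uBox)$ by using the two classes of Example \ref{example1} as witnesses, separating $\ML$ from $\ML(\uBoxp)$ via closure of $\ML$-definable classes under disjoint unions, separating $\ML(\uBoxp)$ from $\ML(\uBox)$ via Proposition \ref{uboxgen} (closure under generated subframes), citing Proposition \ref{prop:mlup_eq_vuml} for the middle equivalence, and noting that both witnessing classes are elementary. One cosmetic slip: the class $\{(W,R)\in\frames \mid R \neq \emptyset\}$ is defined by $\exists x \exists y\, R(x,y)$, not by $\exists x \forall y\, \neg R(x,y)$, but this does not affect the argument.
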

\section{\texorpdfstring{Goldblatt--Thomason -style Theorem for $\ML(\uBoxp)$}{Goldblatt--Thomason -style Theorem for modal logic with positive universal modality}} \label{secgbth}
%


In addition to disjoint unions and generated subframes, we introduce two more frame constructions. With the help of these four constructions, we first review the existing characterisations of $\ML$- and $\ML(\uBox)$-definability when restricted to the elementary frame classes. We then give a novel characterisation of $\ML(\uBoxp)$-definability again restricted to the elementary frame classes. 

\begin{definition}[Bounded Morphism]
Given any two frames $\fF$ = $(W,R)$ and 
$\fF'$ = $(W',R')$, a function $f: W \to W'$ is a {\em{bounded morphism}} if it satisfies the following two conditions:
\begin{description}
\item[$\mathbf{(Forth)}$] If $wRv$, then $f(w)R'f(v)$. 
\item[$\mathbf{(Back)}$] If $f(w)R'v'$, then $wRv$ and $f(v)$ = $v'$ for some $v \in W$. 
\end{description}
If $f$ is surjective, we say that $\fF'$ is a {\em bounded morphic image} of $\fF$. 
\end{definition}

\begin{definition}[Ultrafilter Extensions]
\label{dfn:ultra_filter_ex}
Let $\fF = \tuple{W,R}$ be a Kripke frame, and $\mathrm{Uf}(W)$ denote the set of all ultrafilters on $W$.
Define the binary relation $R^{\mathfrak{ue}}$ on the set $\mathrm{Uf}(W)$ as follows: 
$\mathcal{U} R^{\mathfrak{ue}} \mathcal{U'}$ iff $X \in \mathcal{U}'$ implies $m_{R}(X) \in \mathcal{U}$, for every $X \subseteq W$, where $m_{R}(X)$ $:=$ $\inset{w \in W}{\text{$wRw'$ for some $w' \in X$}}$. 
The frame $\mathfrak{ueF}=(\mathrm{Uf}(W),R^{\mathfrak{ue}})$
is called the {\em ultrafilter extension} of $\fF$.
\end{definition}

A frame class $\cF$ {\em reflects} ultrafilter extensions if $\mathfrak{ueF} \in \cF$ implies $\fF \in \cF$ for every frame $\fF$. It is well-known that every $\ML$- or $\ML(\uBox)$-definable frame class is closed under taking bounded morphic images and reflects ultrafilter extensions (cf.~\cite[Theorem 3.14, Corollary 3.16 and Exercise 7.1.2]{blackburn:2001}). 

\begin{theorem}[Goldblatt--Thomason theorems for $\ML$~\cite{GT1975} and $\ML(\uBox)$~\cite{Goranko92}]\label{gbth}
$\mathrm{(i)}$\; An elementary frame class is $\ML$-definable if and only if it is closed under taking bounded morphic images, generated subframes, disjoint unions and reflects ultrafilter extensions.

\noindent $\mathrm{(ii)}$\; An elementary frame class is $\ML(\uBox)$-definable if and only if it is closed under taking bounded morphic images and reflects ultrafilter extensions.
\end{theorem}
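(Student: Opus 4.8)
The plan is to prove both parts by the model-theoretic (van Benthem-style) argument, carrying out part (i) in detail and obtaining part (ii) as an adaptation. In each case the left-to-right direction is routine: it merely repackages the preservation and reflection facts already recalled above. For (i), an $\ML$-definable class is closed under bounded morphic images, generated subframes and disjoint unions and reflects ultrafilter extensions, because each of these operations respectively preserves or reflects the frame validity of every modal formula; for (ii) one uses only that bounded morphic images preserve, and ultrafilter extensions reflect, $\ML(\uBox)$-validity. Generated subframes and disjoint unions drop out of (ii) precisely because $\uBox$ quantifies over all worlds and hence need not be preserved by those constructions. So the real content is the converse, which I now sketch for (i). Assume $\cF$ is elementary, defined by a first-order theory $\Sigma$, and enjoys the four closure properties. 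Let $\Lambda$ be the set of all $\ML$-formulae valid on $\cF$; since $\cF\subseteq\mathbb{FR}(\Lambda)$ trivially, it suffices to show $\mathbb{FR}(\Lambda)\subseteq\cF$. Fix $\fF=(W,R)\Vdash\Lambda$. The crucial reduction is that, because $\cF$ reflects ultrafilter extensions, it is enough to prove $\mathfrak{ue}\fF\in\cF$.

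To access $\fF$ modally, I would equip it with the valuation $V$ that names every subset of $W$ (one proposition letter $p_X$ per $X\in\mathcal{P}(W)$, with $V(p_X)=X$), obtaining a model $\mathfrak{M}$, and let $\Delta_w$ be the set of $\ML$-formulae true at $w$ in $\mathfrak{M}$. Since $\fF\Vdash\Lambda$ and $\mathfrak{M}$ is built on $\fF$, every formula of $\Lambda$ is true throughout $\mathfrak{M}$, so $\Lambda\subseteq\Delta_w$ for each $w$. Consequently no finite conjunction of $\Delta_w$-formulae can be refuted on all of $\cF$: otherwise its negation would belong to $\Lambda\subseteq\Delta_w$, contradicting the consistency of $\Delta_w$. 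By compactness together with the elementarity of $\cF$, each $\Delta_w$ is therefore satisfied at a point $v_w$ of some model $\mathfrak{N}_w$ whose frame lies in $\cF$; passing to the point-generated submodel keeps us in $\cF$. Taking the disjoint union $\mathfrak{N}=\biguplus_{w\in W}\mathfrak{N}_w$, whose frame $\mathfrak{G}$ is again in $\cF$, yields for every $w\in W$ a world $v_w$ of $\mathfrak{N}$ modally equivalent to $w$ in $\mathfrak{M}$.

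The final step assembles $\mathfrak{ue}\fF$ from $\mathfrak{G}$ by the three building operations. Ultrafilter extensions are modally saturated, and truth is preserved at principal ultrafilters, so $\pi_{v_w}$ in $\mathfrak{ue}\mathfrak{N}$ and $\pi_w$ in $\mathfrak{ue}\mathfrak{M}$ stay modally equivalent. As both $\mathfrak{ue}\mathfrak{N}$ and $\mathfrak{ue}\mathfrak{M}$ are saturated, the relation of modal equivalence between them is a bisimulation (Hennessy--Milner) that is total onto the principal ultrafilters of $\mathfrak{ue}\mathfrak{M}$; from this I would extract a surjective bounded morphism from a generated subframe of $\mathfrak{ue}\mathfrak{G}$ onto $\mathfrak{ue}\fF$. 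To see $\mathfrak{ue}\mathfrak{G}\in\cF$ I would invoke the standard fact that $\mathfrak{ue}\mathfrak{G}$ is a bounded morphic image of an ultrapower of $\mathfrak{G}$, which lies in $\cF$ by elementarity, so closure under bounded morphic images gives $\mathfrak{ue}\mathfrak{G}\in\cF$. Closure under generated subframes and bounded morphic images then yields $\mathfrak{ue}\fF\in\cF$, and reflection of ultrafilter extensions finally gives $\fF\in\cF$.

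For part (ii) the same skeleton works with $\ML(\uBox)$ replacing $\ML$, but simplifies: since $\uBox$ already ranges over the whole model, one need not reassemble point-generated components, so the global $\ML(\uBox)$-theory of $\mathfrak{M}$ is realized directly on a single $\cF$-frame, and neither closure under generated subframes nor under disjoint unions is required. The saturation-plus-modal-equivalence argument then produces a surjective bounded morphism onto $\mathfrak{ue}\fF$ (surjectivity is now indispensable, as $\uBox$ is preserved only by surjective morphisms), after which reflection again delivers $\fF\in\cF$. I expect the main obstacle to be exactly this penultimate manoeuvre: converting the Hennessy--Milner bisimulation between the two saturated ultrafilter extensions into an honest surjective bounded morphism whose codomain is precisely $\mathfrak{ue}\fF$, while keeping careful track of which closure operation is applied to which intermediate frame (and, for the elementarity input, routing $\mathfrak{ue}\mathfrak{G}$ through an ultrapower of $\mathfrak{G}$) so that every frame along the way provably stays inside $\cF$.
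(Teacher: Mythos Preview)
The paper does not prove Theorem~\ref{gbth}; it is quoted from \cite{GT1975} and \cite{Goranko92} as background, with no argument given in the text or the appendices, so there is no proof here to compare your proposal against directly.

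Your sketch is the standard van Benthem argument and is broadly sound, but the step you yourself flag as the obstacle is a genuine soft spot. From the Hennessy--Milner bisimulation between $\mathfrak{ue}\mathfrak{N}$ and $\mathfrak{ue}\mathfrak{M}$ you cannot simply ``extract'' a surjective bounded morphism: a bisimulation is a relation, not a map. What rescues it here is something you did not state, namely that because the language contains a proposition $p_X$ for every $X\subseteq W$, each point $s$ on the source side determines the \emph{unique} ultrafilter $\{X : s\Vdash p_X\}$ on $W$, so the bisimulation is functional in the $\mathfrak{ue}\mathfrak{N}\to\mathfrak{ue}\mathfrak{M}$ direction and that function \emph{is} the desired morphism. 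You also argue totality only onto the \emph{principal} ultrafilters of $\mathfrak{ue}\mathfrak{M}$; hitting an arbitrary $\mathcal{U}$ still requires showing that $\{p_A:A\in\mathcal{U}\}$ is realized on the source side, i.e.\ the finite-satisfiability step you omitted. For comparison, the paper's own proof of the neighbouring result for $\ML(\uBoxp)$ (Theorem~\ref{thm:GbTh4mlup}, Appendix~\ref{secB}) avoids this detour entirely: it passes to an $\omega$-saturated elementary extension rather than to $\mathfrak{ue}\mathfrak{G}$, defines the morphism \emph{directly} by $f(s)=\{A\subseteq W : s\Vdash p_A\}$, and checks \textbf{(Forth)}, \textbf{(Back)} and surjectivity by hand. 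Adopting that definition in place of your bisimulation manoeuvre would close both gaps at once.
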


In order to characterise $\ML(\uBoxp)$-definability of elementary frame classes, we need to introduce the following notion of {\em reflection of finitely generated subframes}:
a frame class $\cF$ {\em reflects} finitely generated subframes whenever it is the case for all frames $\fF$ that, if every finitely generated subframe of $\fF$ is in $\cF$, then $\fF \in \cF$.\footnote{Closure under generated subframes and reflection of finitely generated subframes characterise the definability of hybrid logic with satisfaction operators and downarrow binder when restricted elementary frame classes~\cite[Theorem 26]{AC2007}. } In what follows, we show that every $\ML(\uBoxp)$-definable class of Kripke frames reflects finitely generated subframes via the following intermediate result for $\bigvee \uBox\ML$. 

\begin{proposition}
\label{prop:fin_gen_frames}
Let $\fF$ be a frame and $\varphi\in\bigvee \uBox\ML$. 
If $\fG \Vdash \varphi$ for all finitely generated subframes $\fG$ of $\fF$, 
then $\fF \Vdash \varphi$. 
\end{proposition}
\begin{proof}
We show the contrapositive implication. Let $\varphi$ be $\bigvee_{i \in I} \uBox \psi_{i}$ and suppose that $\fF \not\Vdash {\bigvee_{i \in I}} \uBox \psi_{i}$. Now, we can find a valuation $V$ and a state $w$ such that $(\fF,V),w \not\Vdash \uBox \psi_{i}$ for all $i \in I$. Thus, for each $i \in I$, there is a state $w_{i}$ such that $(\fF,V),w_{i} \not\Vdash \psi_{i}$. Define $X$ := $\inset{w_{i}}{i \in I}$ and note that $X$ is finite.
Consider the submodel $(\fF_{X},V_{X})$ of $\fF$ generated by $X$. 
Since for each $i\in I$, $(\fF,V),w_{i} \not\Vdash \psi_{i}$ and $\psi_{i}\in\ML$, and since the satisfaction of $\ML$ is invariant under generated submodels (cf.~\cite[Proposition 2.6]{blackburn:2001}), it follows that $(\fF_{X},V_{X}),w_{i} \not\Vdash \psi_{i}$ for each $i\in I$. Thus $(\fF_{X},V_{X}) \not\Vdash \uBox \psi_{i}$ for each $i \in I$.
Hence $(\fF_{X},V_{X}) \not\Vdash \bigvee_{i \in I} \uBox \psi_{i}$, which implies our goal $\fF_{X} \not\Vdash \bigvee_{i \in I} \uBox \psi_{i}$.
\end{proof}

The fact that every $\ML(\uBoxp)$-definable class reflects finitely generated subframes follows by Propositions \ref{prop:mlup_eq_vuml} and \ref{prop:fin_gen_frames}. 

\begin{proposition}\label{uboxreflects}
Every $\ML(\uBoxp)$-definable class of Kripke frames reflects finitely generated subframes.
\end{proposition}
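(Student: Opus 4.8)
The plan is to reduce the claim to the syntactically simpler fragment $\bigvee \uBox\ML$ of closed disjunctive $\uBox$-clauses and then argue by contraposition. Since, by Proposition~\ref{prop:frame_def_ordering}, $\ML(\uBoxp) \feq \bigvee \uBox\ML$, any $\ML(\uBoxp)$-definable class $\cF$ can be written as $\cF = \mathbb{FR}(\Gamma)$ for some set $\Gamma \subseteq \bigvee \uBox\ML$; that is, every $\gamma \in \Gamma$ has the form $\gamma = \uBox\varphi_1 \vee \dots \vee \uBox\varphi_n$ with $\varphi_1, \dots, \varphi_n \in \ML$ and $n$ finite. It therefore suffices to show that $\mathbb{FR}(\Gamma)$ reflects finitely generated subframes for every such $\Gamma$. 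So I would fix a frame $\fF$ all of whose finitely generated subframes lie in $\cF$ and aim to conclude $\fF \Vdash \Gamma$, handling one clause at a time (this also disposes of the case where $\Gamma$ is infinite, since each individual clause is a finite disjunction).

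Fix $\gamma = \uBox\varphi_1 \vee \dots \vee \uBox\varphi_n \in \Gamma$ and suppose towards a contradiction that $\fF \not\Vdash \gamma$. Then there is a valuation $V$ with $(\fF, V) \not\Vdash \gamma$. Since $\gamma$ is closed and fails in $(\fF,V)$, each disjunct $\uBox\varphi_i$ fails in $(\fF,V)$; unfolding the semantics of $\uBox$, for each $i \le n$ I may pick a witness $u_i \in |\fF|$ with $(\fF, V), u_i \not\Vdash \varphi_i$. The crucial point is that, because the disjunction is finite, the finite set $X := \setof{u_1, \dots, u_n}$ of witnesses already suffices.

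Now consider the finitely generated subframe $\fF_X$ together with the valuation $V'$ obtained by restricting $V$ to $|\fF_X|$. Each $\varphi_i$ is a plain modal formula, and modal truth is invariant under generated submodels; hence $(\fF_X, V'), u_i \not\Vdash \varphi_i$, since $u_i \in X \subseteq |\fF_X|$ and every point relevant to the evaluation of $\varphi_i$ at $u_i$ already lies in $\fF_X$. Thus each witness $u_i$ survives the passage to $\fF_X$, so $(\fF_X, V') \not\Vdash \uBox\varphi_i$ for every $i \le n$, and therefore $(\fF_X, V') \not\Vdash \gamma$. Consequently $\fF_X \not\Vdash \Gamma$, so $\fF_X \notin \cF$, contradicting the assumption that every finitely generated subframe of $\fF$ lies in $\cF$. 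Hence $\fF \Vdash \gamma$ for every $\gamma \in \Gamma$, so $\fF \in \cF$.

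The only place requiring care, and the conceptual heart of the argument, is the preservation step: failure of $\uBox\varphi_i$ is witnessed by a \emph{single} point, that witness is retained in $\fF_X$, and the plain-modal subformula $\varphi_i$ keeps its truth value there by generated-submodel invariance. Note that validity of $\uBox\varphi_i$ would \emph{not} transfer downward to a generated subframe, but we only ever need to transfer its failure; this is precisely why positivity of the universal modality is what makes the reduction to a finitely generated subframe succeed.
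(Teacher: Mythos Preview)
Your argument is correct and is essentially the paper's own proof: the paper derives the proposition from Proposition~\ref{prop:frame_def_ordering} ($\ML(\uBoxp)\feq\bigvee\uBox\ML$) together with Proposition~\ref{prop:fin_gen_frames}, whose proof is exactly your contrapositive witness-collection argument using invariance of $\ML$-truth under generated submodels. One small inaccuracy in your closing commentary: validity of $\uBox\varphi_i$ \emph{does} transfer downward to a generated submodel (if every point of $(\fF,V)$ satisfies $\varphi_i$, then so does every point of the submodel); what fails is the \emph{upward} transfer, and---as you correctly use---it is the \emph{failure} of $\uBox\varphi_i$ that you need to push into $\fF_X$.
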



%
%
Whereas the original Goldblatt--Thomason theorem for basic modal logic was proved via duality between algebras and frames~\cite{GT1975}, our proof of Goldblatt--Thomason -style theorem modifies the model-theoretic proof given by van Benthem~\cite{Benthem1993} for basic modal logic.

\begin{definition}[Satisfiability]
Let $\Gamma$ be a set of formulas, $\mK$ a model and $\cF$ a class of frames. 
We say that $\Gamma$ is {\em satisfiable} in $\mK$ if there exists a point $w$ of $\mK$ such that $\mK,w\Vdash \gamma$ for all $\gamma \in \Gamma$. We say that $\Gamma$ is {\em finitely satisfiable} in $\mK$ if each finite subset of $\Gamma$ is satisfiable in $\mK$. We say that $\Gamma$ is {\em satisfiable} in $\cF$ if there exists a frame $\fF\in \cF$ and a valuation $V$ on $\fF$ such that $\Gamma$ is satisfiable in $(\fF, V)$. Finally, we say that $\Gamma$ is {\em finitely satisfiable} in $\cF$ if each finite subset of $\Gamma$ is satisfiable in $\cF$.
\end{definition}

\begin{theorem}
\label{GbTh4mlup}
Given any elementary frame class $\cF$, the following are equivalent:
\begin{itemize}
\item[$(\mathrm{i})$] $\cF$ is $\ML(\uBoxp)$-definable.
\item[$(\mathrm{ii})$] $\cF$ is closed under taking generated subframes and bounded morphic images, and reflects ultrafilter extensions and finitely generated subframes. 
\end{itemize}
\end{theorem}

\begin{proof}
The direction from $(\mathrm{i})$ to $(\mathrm{ii})$ follows directly by Propositions \ref{uboxgen} and \ref{uboxreflects}, and Theorem \ref{gbth}.
In the proof of the converse direction, we use some notions from first-order model theory such as elementary extensions and $\omega$-saturation. The reader unfamiliar with them is referred to~\cite{CK1990}. Assume $(\mathrm{ii})$ and define $\mathrm{Log}(\cF)$ := $\inset{\varphi \in \ML(\uBoxp)}{\cF \Vdash \varphi}$. 
We show that, for any frame $\fF$, 
$\fF \in \cF$  iff  $\fF \Vdash \mathrm{Log}(\cF)$.

Consider any $\fF=(W,R)$. It is trivial to show the Only-If-direction, and so we show the If-direction. 
Assume that $\fF \Vdash \mathrm{Log}(\cF)$. 
To show $\fF \in \cF$, we may assume, without loss of generality, that $\fF$ is finitely generated. 
This is because: otherwise, it would suffice to show, since $\cF$ reflects finitely generated subframes, that $\fG \in \cF$ for all finitely generated subframes $\fG$ of $\fF$ (note that $\fG \Vdash \mathrm{Log}(\cF)$ by Proposition \ref{uboxgen}). 
Let $U$ be a finite generator of $\fF$. 
Let us expand our syntax with a (possibly uncountable) set $\inset{p_{A}}{A \subseteq W}$ of new propositional variables and define $\Delta$ to be the set containing exactly:
\[
p_{A \cap B}\leftrightarrow p_{A} \land p_{B}, \quad
p_{W \setminus A}\leftrightarrow\neg p_{A}, \quad
p_{m_{R}(A)}\leftrightarrow  \Diamond p_{A}, \quad
p_{W}, \\
\]
where $A, B \subseteq W$ and recall that $m_{R}(A)$ := $\inset{x \in W}{\text{$xRy$ for some $y \in A$ }}$ (cf.~Definition~\ref{dfn:ultra_filter_ex}). 
Define
\[
\Delta_{\fF,u} \dfn \{p_{\{u\}} \land \Box^n \varphi \mid n\in\omega \text{ and } \varphi \in \Delta\}, 
\]
for each $u\in U$. 
Recall that $\fF$ is finitely generated by $U$. The intuition here is that $(\Delta_{\fF,u})_{u \in U}$ provides a ``complete enough description'' of $\mathfrak{F}$. 

Let us introduce a finite set $\{x_{u} | u \in U \}$ of variables in first-order syntax and let $ST_{x_{u}}$ be the standard translation from $\ML(\uBoxp)$ to the corresponding first-order logic via the variable $x_{u}$, see Definition \ref{def:st}. We will show that $\bigcup_{u \in U} \{ ST_{x_{u}} (\varphi) \,|\, \varphi \in \Delta_{\fF,u}\}$ is satisfiable in $\cF$ in the sense of the satisfaction in first-order model theory. Since $\cF$ is elementary, it follows from the compactness of first-order logic that it suffices to show that $\bigcup_{u \in U} \{ ST_{x_{u}} (\varphi) \,|\, \varphi \in \Delta_{\fF,u}\}$ is finitely satisfiable in $\cF$. Let $\Gamma$ be a finite subset of this set. Then, we may write $\Gamma$ = $\bigcup_{1 \leq k \leq n} ST_{x_{u_{k}}} [\Gamma_{u_{k}}]$ for some $u_{1}$, \ldots, $u_{n} \in U$ and some finite $\Gamma_{u_{k}} \subseteq \Delta_{\fF,u_{k}}$ ($1 \leq k \leq n$). 
Assume, for the sake of a contradiction, that $\Gamma$ is not satisfiable in $\cF$. 
It follows that $\cF \Vdash \vartheta$ in the sense of modal logic, where $\vartheta$ := $\bigvee_{1 \leq k \leq n} \uBox \neg \bigwedge \Gamma_{u_{k}}$. 
Since $\vartheta$ is an $\ML(\uBoxp)$-formula, it belongs to $\mathrm{Log}(\cF)$. 
Thus by the assumption $\fF  \Vdash \mathrm{Log}(\cF)$, we conclude that $\fF \Vdash\vartheta$; 
and therefore $\Gamma$ is not satisfiable in $\mathfrak{F}$ in the sense of first-order model theory. %
However, $\Gamma$ is clearly satisfiable in $\fF$ under the natural structure interpreting $p_{A}$ as $A$ and the natural assignment sending $x_{u}$ to $u$. 
This is a contradiction.
Therefore, $\bigcup_{u \in U} \{ ST_{x_{u}} (\varphi) \,|\, \varphi \in \Delta_{\fF,u}\}$ is satisfiable in $\cF$.

Let $\fG \in \cF$ be such that $\bigcup_{u \in U} \{ ST_{x_{u}} (\varphi) \,|\, \varphi \in \Delta_{\fF,u}\}$ is satisfiable in $\fG$. 
Let us fix a valuation $V$ and a finite set $Z$ := $\{w_{u}  | u \in U\}$ of points such that 
$\bigcup_{u \in U} \{ ST_{x_{u}} (\varphi) \,|\, \varphi \in \Delta_{\fF,u}\}$ is satisfied in $(\fG,V)$ under an assignment sending each $x_{u}$ to $w_{u}$.
Then, $(\fG,V),  w_{u} \Vdash  \Delta_{\fF,u}$. 
Now let $(\fG^*_Z,V^*_Z)$ denote some $\omega$-saturated elementary extension of the $Z$ generated submodel of $(\fG,V)$. 
It is easy to check that $(\fG^*_Z,V^*_Z),  w_{u}^{\ast} \Vdash  \Delta_{\fF,u}$ where $w_{u}^{\ast}$ is the corresponding element in $\fG^*_Z$ to $w_{u}$ of $\fG_Z$ and that $(\fG^*_Z,V^*_Z)\Vdash \Delta$. 
Since $\cF$ is elementary and closed under taking generated subframes, we conclude first that $\fG_{Z} \in \cF$ and then that $\fG^*_Z\in \cF$.
%
%
%
%
%
%
%
We can now prove the following claim. 
\begin{claim}
The ultrafilter extension 
$\mathfrak{ue F}$ is a bounded morphic image of $\fG_{Z}^{\ast}$. 
\end{claim}
\noindent By closure of $\cF$ under bounded morphic images, we oftain $\mathfrak{ueF} \in \cF$. Finally, since $\cF$ reflects ultrafilter extensions, $\fF \in \cF$, as required. \qed

\noindent \textbf{(Proof of $Claim$)} 
Define a mapping $f: |\fG_{Z}^{\ast}| \to \mathrm{Uf}(W)$ (where $\mathrm{Uf}(W)$ is the set of all ultrafilters on $W$) by 
\[
f(s) := \inset{A \subseteq W}{ (\fG_{Z}^{\ast},V_{Z}^{\ast}), s \Vdash p_{A}  }.
\]
We will show that (a) $f(s)$ is an ultrafilter on $W$; (b)$f$ is a bounded morphism; (c) $f$ is surjective. 
Below, we denote by $S$ the underlying binary relation of $\fG_{Z}^{\ast}$. 
\begin{itemize}
\item[(a)] $f(u)$ is an ultrafilter: Follows immediately from the fact that $(\fG_{Z}^{\ast},V_{Z}^{\ast}) \Vdash \Delta$. 
\item[(b1)] $f$ satisfies $\textbf{(Forth)}$: We show that $sSs'$ implies $f(s)R^{\mathfrak{ue}}f(s')$. 
Assume that $sSs'$. 
By the definition of $R^{\mathfrak{ue}}$, it suffices to show that $A \in f(s')$ implies $m_{R}(A) \in f(s)$. 
Suppose $A \in f(s')$. Thus $(\fG_{Z}^{\ast},V_{Z}^{\ast}), s' \Vdash p_{A}$. Since $sSs'$, we obtain $(\fG_{Z}^{\ast},V_{Z}^{\ast}), s \Vdash \Diamond p_{A}$. Since $(\fG_{Z}^{\ast},V_{Z}^{\ast}) \Vdash \Delta$, $(\fG_{Z}^{\ast},V_{Z}^{\ast}) \Vdash \Diamond p_{A} \leftrightarrow p_{m_{R}(A)}$. 
Therefore $(\fG_{Z}^{\ast},V_{Z}^{\ast}), s \Vdash p_{m_{R}(A)}$, and hence $m_{R}(A) \in f(s)$, as desired. 

\item[(b2)] $f$ satisfies $\textbf{(Back)}$: We show that $f(s)R^{\mathfrak{ue}}\mathcal{U}$ implies $sSs'$ and $f(s')$ = $\mathcal{U}$ for some $s' \in |\fG_{Z}^{\ast}|$. Assume that $f(s)R^{\mathfrak{ue}}\mathcal{U}$. 
We will find a state $s'$ such that $sSs'$ and $(\fG_{Z}^{\ast},V_{Z}^{\ast}), s' \Vdash p_{A}$ for all $A \in \mathcal{U}$. 
By $\omega$-saturation, it suffices to show that $\inset{p_{A}}{A \in \mathcal{U}}$ is finitely satisfiable in the set $\inset{t \in |\fG_{Z}^{\ast}|}{sSt}$ of the successors of $s$. 
Take any $A_{1}$, $\ldots$, $A_{n} \in \mathcal{U}$. 
Then, $\bigcap_{1 \leq i \leq n} A_{i} \in \mathcal{U}$. 
Now since $f(s)R^{\mathfrak{ue}}\mathcal{U}$, $m_{R}(\bigcap_{1 \leq i \leq n} A_{i}) \in f(s)$. 
Hence $(\fG_{Z}^{\ast},V_{Z}^{\ast}), s \Vdash p_{m_{R}(\bigcap_{1 \leq i \leq n} A_{i})}$.
Since $(\fG_{Z}^{\ast},V_{Z}^{\ast}) \Vdash \Delta$, $(\fG_{Z}^{\ast},V_{Z}^{\ast}) \Vdash p_{m_{R}(\bigcap_{1 \leq i \leq n} A_{i})} \leftrightarrow \Diamond p_{\bigcap_{1 \leq i \leq n} A_{i}}$.
Therefore $(\fG_{Z}^{\ast},V_{Z}^{\ast}), s \Vdash \Diamond p_{\bigcap_{1 \leq i \leq n} A_{i}}$. 
Thus there is a state $s' \in |\fG_{Z}^{\ast}|$ such that $sSs'$ and 
$(\fG_{Z}^{\ast},V_{Z}^{\ast}), s' \Vdash p_{\bigcap_{1 \leq i \leq n} A_{i}}$. Therefore and since $(\fG_{Z}^{\ast},V_{Z}^{\ast}) \Vdash \Delta$, it follows that 
$(\fG_{Z}^{\ast},V_{Z}^{\ast}), s' \Vdash p_{A_{i}}$ for all $1 \leq i \leq n$.
\item[(c)] $f$ is surjective: Let us take any ultrafilter $\mathcal{U} \in |\mathfrak{ueF}|$. 
To prove surjectiveness, we show that the set $\inset{p_{A}}{A \in \mathcal{U}}$ is satisfiable in $(\fG_{Z}^{\ast},V_{Z}^{\ast})$. By $\omega$-saturatedness of $(\fG_{Z}^{\ast},V_{Z}^{\ast})$, it suffices to show finite satisfiability. Fix any $A_{1}, \ldots, A_{n} \in \mathcal{U}$. 
It follows that $\bigcap_{1 \leq k \leq n} A_{k} \in \mathcal{U}$, and hence $\bigcap_{1 \leq k \leq n} A_{k} \neq \emptyset$. 
Pick $w \in \bigcap_{1 \leq k \leq n} A_{k}$. 
Since $\fF$ is finitely generated by $U$, $w$ is reachable (in $\fF$) from some point $u \in U$ in a finite number of steps. But then there is some $l \in \omega$ such that $(\fF, V_{0}), u \Vdash p_{ (m_{R})^{l} ( \bigcap_{1 \leq k \leq n} A_{k}  )}$, where $V_{0}$ is the natural valuation on $\fF$ sending $p_{X}$ to $X$. 
%
Since $V_0$ is the natural valuation, we also obtain that $u\in  (m_{R})^{l}( \bigcap_{1 \leq k \leq n} A_{k})$,
and thus $\Delta$ contains $p_{\setof{u}} \leftrightarrow p_{\setof{u}} \land p_{(m_{R})^{l} ( \bigcap_{1 \leq k \leq n} A_{k})}$. It now follows from $(\fG^*_Z,V^*_Z),  w_{u}^{\ast} \Vdash  \Delta_{\fF,u}$ that 
$(\fG_{Z}^{\ast},V_{Z}^{\ast}), w_{u}^{\ast} \Vdash p_{\setof{u}}$.
Since $(\fG_{Z}^{\ast},V_{Z}^{\ast}) \Vdash \Delta$, we obtain
$(\fG_{Z}^{\ast},V_{Z}^{\ast}), w_{u}^{\ast} \Vdash p_{ (m_{R})^{l} ( \bigcap_{1 \leq k \leq n} A_{k})}  $, and hence also that
$(\fG_{Z}^{\ast},V_{Z}^{\ast}), w_{u}^{\ast} \Vdash \Diamond^{l} p_{\bigcap_{1 \leq k \leq n} A_{k}}$. 
Therefore, $\setof{p_{A_{1}},\ldots,p_{A_{n}}}$ is satisfiable in $(\fG_{Z}^{\ast},V_{Z}^{\ast})$. \hfill $\dashv$
\end{itemize}
\let\qed\relax
\end{proof}

\section{Finite Goldblatt-Thomason-style Theorem for Relative Modal definability with Positive Universal Modality}\label{sec:relmd}

Given a class $\cG$ of frames,  we say that a set of formulas {\em defines} a class $\cF$ of frames {\em within} $\cG$ if, for all frames $\fF \in \cG$, the equivalence: $\fF \Vdash \varphi$ $\Leftrightarrow$ $\fF \in \cF$ holds. A frame $\cF=(W,R)$ is called \emph{finite} whenever $W$ is a finite set and \emph{transitive} whenever $R$ is a transitive relation.  
In what follows, let $\cF_{\mathrm{fintra}}$ be  the class of all finite transitive frames and $\cF_{\mathrm{fin}}$ the class of all finite frames.

With the help of frame constructions such as bounded morphic images, disjoint unions, generated subframes, we first review the existing characterisations of relative $\ML$- and $\ML(\uBox)$-definability within the class of finite transitive frames. We then give a novel characterisation of relative $\ML(\uBoxp)$-definability again within the class of finite transitive frames. 

\begin{theorem}[Finite Goldblatt--Thomason Theorems for $\ML$ \cite{Benthem1993} and $\ML(\uBox)$ \cite{GG1993}]
\label{fingbth}
\text{}

\begin{enumerate}
\item A class of finite transitive frames is $\ML$-definable within the class $\cF_{\mathrm{fintra}}$ of all finite transitive frames if and only if it is closed under taking bounded morphic images, generated subframes, and disjoint unions. 
\item A class of finite frames is $\ML(\uBox)$-definable within the class $\cF_{\mathrm{fin}}$ of all finite frames if and only if it is closed under taking bounded morphic images. 
\end{enumerate}
\end{theorem}

In order to show the corresponding characterisation of relative definability in $\ML(\uBoxp)$, a variant of the Jankov-Fine formula is defined. 

\begin{definition}
Let $\fF$ = $(W,R)$ be a finite transitive frame. Put $W$ $:=$ $\setof{w_{0},\ldots,w_{n}}$. Associate a new proposition variable $p_{w_{i}}$ with each $w_{i}$ and define $\Box^{+} \varphi$ := $\Box \varphi \land \varphi$. The Jankov-Fine formula $\varphi_{\fF,w_{i}}$ at $w_{i}$ is defined as the conjunction of all the following formulas: 
\begin{enumerate}
\item $p_{w_{i}}$
\item $\Box(p_{w_{0}} \lor \cdots \lor p_{w_{n}})$. 
\item $\bigwedge \inset{\Box^{+}( p_{w_i} \to \neg p_{w_j} )}{w_{i} \neq w_{j}}$.
\item $\bigwedge \inset{\Box^{+}( p_{w_i} \to \Diamond  p_{w_j} )}{ (w_{i},w_{j}) \in R }$.
\item $\bigwedge \inset{\Box^{+}( p_{w_i} \to \neg \Diamond  p_{w_j} )}{(w_{i},w_{j}) \notin R }$.
\end{enumerate}
The Jankov-Fine formula $\varphi_{\fF}$  is defined as $\bigvee_{w \in W} \uBox \neg \varphi_{\mathfrak{F},w}$.
\end{definition}

We note that the Jankov-Fine formula $\varphi_{\fF,w_{i}}$ at $w_{i}$ is an $\ML$-formula and thus the Jankov-Fine formula $\varphi_{\fF}$ is an $\ML(\uBoxp)$-formula. 

\begin{lemma}
\label{lem:fin_des}
Let $\mathfrak{F}$ = $(W,R)$ be a finite transitive frame. For any transitive frame $\fG$, the following are equivalent:
\begin{itemize}
\item[$(\mathrm{i})$] the Jankov-Fine formula $\varphi_{\fF}$ is not valid in $\fG$, 
\item[$(\mathrm{ii})$]  there is a finite set $Y \subseteq |\fG|$ such that $\fF$ is a bounded morphic image of $\fG_{Y}$, where $\fG_{Y}$ is the subframe of $\fG$ generated by $Y$. 
\end{itemize}
\end{lemma}
\begin{proof}
The direction from (ii) to (i) is immediate from the fact that $\varphi_{\mathfrak{F}}$ is not valid in $\fF$ under the natural valuation sending $p_{w_{i}}$ to $\setof{w_{i}}$ (Note: validity of $\ML(\uBoxp)$-formulas is closed under taking under bounded morphic images and generated subframes, see, e.g., \cite[Exercise 7.1.2]{blackburn:2001} and Proposition \ref{uboxgen}).  
So, we focus on the converse direction.

Assume (i). It follows from $\fG\not\Vdash\varphi_{\fF}$ that  $(\fG, V)\not\Vdash\varphi_{\fF}$, for some assignment $V$. Thus,  for each $i\leq n$, there exists a point $v_i$ of $\fG$ such that $(\fG,V),v_i\Vdash\varphi_{\fF,w_i}$. Put $Y$ := $\inset{v_{i}}{0 \leq i\leq n}$, let $\fG_{Y}$ denote the subframe of $\fG$ generated by $Y$, and let $U$ be the reduction of $V$ into the frame $\fG_{Y}$. Since satisfaction of $\ML$-formulas is closed under taking generated submodels (see, e.g., \cite[Prop. 2.6]{blackburn:2001}), it follows that  $(\fG_{Y},U), v_{i} \Vdash \varphi_{\fF,w_{i}}$, for each $i\leq n$.
Let us put $\fG_{Y}$ = $(G_Y,S)$.
The first clause of the Jankov-Fine formula $\varphi_{\fF,w_{i}}$ implies that, for  each $i\leq n$, $U(p_{w_{i}})$ $\neq$ $\emptyset$.
By the second and the third clause, we obtain $\bigcup_{w \in W } U(p_{w})$ = $G_Y$ and $U(p_{w_{i}}) \cap U(p_{w_{j}}) = \emptyset$ for any distinct indices $i$ and $j$.  
This enables us to define a surjective mapping $f: G_Y \to W$. Define $f(v)\dfn w_i$ if $v\in U(p_{w_i})$. Clearly $f$ is a well defined surjection.

In what follows, we show that $f$ is a bounded morphism. The condition (Forth) is established as follows. Assume that $xSy$ and let $i,j$ be such that  $f(x)$ = $w_{i}$ and $f(y)$ = $w_{j}$. Thus $x \in U(p_{w_i})$ and $y \in U(p_{w_j})$. Since  $\fG_{Y}$ is $Y$-generated, $x$ is reachable from some $v_{k} \in Y$. Suppose for a contradiction that $w_{i}Rw_{j}$ fails in $\fF$. Then $\Box^{+}(p_{w_{i}} \to \neg \Diamond p_{w_{j}})$ is a conjunct in the Jankov-Fine formula $\varphi_{\fF,w_{k}}$. Recall that $(\fG_{Y},U), v_{k} \Vdash \varphi_{\fF,w_{k}}$. It now follows from $(\fG_{Y},U),v_{k} \Vdash \Box^{+}(p_{w_{i}} \to \neg \Diamond p_{w_{j}})$ that $xSy$ fails. A contradiction. Therefore, $w_{i}Rw_{j}$ holds in $\fF$. 

\noindent The condition (Back) is shown as follows. Assume that $f(x)R w_{j}$ and let $i$ be such that $f(x)$ = $w_{i}$. From the definition of $f$, it follows that  $x \in U(p_{w_i})$. Since  $\fG_{Y}$ is $Y$-generated, $x$ is reachable from some $v_{k} \in Y$. Since $w_{i}Rw_{j}$, we have that $\Box^{+}(p_{w_{i}} \to  \Diamond p_{w_{j}})$ is a conjunct in the Jankov-Fine formula $\varphi_{\fF,w_{k}}$.  Recall again that $(\fG_{Y},U), v_{k} \Vdash \varphi_{\fF,w_{k}}$. It follows from $(\fG_{Y},U),v_{k} \Vdash \Box^{+}(p_{w_{i}} \to \Diamond p_{w_{j}})$ and  $x \in U(p_{w_i})$ that there is some $y$ such that $f(y)$ = $w_{j}$ and $xSy$ holds, as desired.
\end{proof}

\begin{theorem}
\label{thm:GbTh4mlup}
For every class $\cF$ of finite transitive frames, the following are equivalent:
\begin{itemize}
\item[$(\mathrm{i})$] $\cF$ is $\ML(\uBoxp)$-definable within $\cF_{\mathrm{fintra}}$.
\item[$(\mathrm{ii})$] $\cF$ is closed under taking generated subframes and bounded morphic images.
\end{itemize}
\end{theorem}

\begin{proof}
The direction from (i) to (ii) is easy to establish (by Proposition \ref{uboxgen} and Theorem \ref{fingbth}), so we focus on the converse direction. Assume (ii). Define $\mathrm{Log}(\cF)$ = $\inset{\varphi \in \ML(\uBoxp)}{\cF \Vdash \varphi}$. We show that $\mathrm{Log}(\cF)$ defines $\cF$ within $\cF_{\mathrm{fintra}}$. Fix any finite and transitive frame $\fF \in \cF_{\mathrm{fintra}}$. In what follows, we show the following equivalence:
\[
\begin{array}{lll}
\fF \in \cF &\iff& \fF \Vdash \mathrm{Log}(\cF). \\
\end{array}
\]
The left-to-right direction is immediate, so we concentrate on the converse direction. Assume $\fF \Vdash \mathrm{Log}(\cF)$. Since $\mathfrak{F}$ is finite and transitive, let us take the Jankov-Fine formula $\varphi_{\mathfrak{F}}$. Since $\varphi_{\mathfrak{F}}$ is not valid in $\mathfrak{F}$, $\varphi_{\mathfrak{F}} \notin \mathrm{Log}(\cF)$. Thus there is a {\em transitive} frame $\fG \in \cF$ (recall that $\cF$ is a class of transitive frames) such that $\varphi_{\mathfrak{F}}$ is not valid in $\fG$.  By Lemma \ref{lem:fin_des}, there is a finite set $Y \subseteq |\fG|$ such that $\fF$ is a bounded morphic image of $\fG_{Y}$. Since $\fG \in \cF$, $\fG_{Y} \in \cF$ by $\cF$'s closure under generated subframes. It follows from $\cF$'s closure under bounded morphic images that $\fF \in \cF$, as desired.
\end{proof}

\section{Modal Logics with Team Semantics}\label{sec:mlteams}
We now turn from modal logics with Kripke semantics to modal logics in which the semantics is defined with respect to team-pointed models.
In this section we define the team-based modal logics that are relevant for this paper. We survey basic properties and known result concerning  expressive power. Later, in Section \ref{sec:mlandum}, we connect these two different semantics with respect to definability.

\subsection{Basic notions of team semantics}

A subset $T$ of the domain of a Kripke model $\mK$ is called \emph{a team of $\mK$}. Before we define the so-called \emph{team semantics} for $\ML$, let us first introduce some notation that makes defining the semantics simpler.
\begin{definition}
Let $\mK=(W,R,V)$ be a model and $T$ and $S$ teams of $\mK$. Define 
\begin{center}
$R[T]$ $:=$ $\{w\in W \mid \exists v\in T (vRw) \}$ and 
$R^{-1}[T]$ $:=$ $\{w\in W \mid \exists v\in T (wRv)\}$.
\end{center}
For teams $T$ and $S$ of $\mK$, we write $T[R]S$ if $S\subseteq R[T]$ and $T\subseteq R^{-1}[S]$.
\end{definition}
Thus, $T[R]S$ holds if and only if for every $w\in T$ there exists some $v\in S$ such that $wRv$, and for every $v\in S$ there exists some $w\in T$ such that $wRv$.
The team semantics for $\ML$ is defined as follows.
We use the symbol ``$\models$'' for team semantics instead of the symbol ``$\Vdash$'' which was used for Kripke semantics. 
\begin{definition}
Let $\mK$ be a Kripke model and $T$ a team of $\mK$. 
The satisfaction relation $\mK,T\models \varphi$ for $\ML(\Phi)$ is defined as follows. 
\begin{align*}
\mK,T\models p  \quad\Leftrightarrow\quad& w\in V(p) \, \text{ for every $w\in T$.}\\
\mK,T\models \neg p \quad\Leftrightarrow\quad& w\not\in V(p) \, \text{ for every $w\in T$.}\\
\mK,T\models (\varphi\land\psi) \quad\Leftrightarrow\quad& \mK,T\models\varphi \text{ and } \mK,T\models\psi.\\
\mK,T\models (\varphi\lor\psi) \quad\Leftrightarrow\quad& \mK,T_1\models\varphi \text{ and } 
\mK,T_2\models\psi \, \text{ for some $T_1$ and $T_2$}\\
&\text{such that $T_1\cup T_2= T$}.\\
\mK,T\models \Diamond\varphi \quad\Leftrightarrow\quad& \mK,T'\models\varphi \text{ for some $T'$ such that $T[R]T'$}.\\
\mK,T\models \Box\varphi \quad\Leftrightarrow\quad& \mK,T'\models\varphi, \text{ where $T'=R[T]$}.
\end{align*}
\end{definition}
A set $\Gamma$ of formulas is \emph{valid in a model} $\mK=(W,R,V)$ (in team semantics), in symbols $\mK\models \Gamma$, if $\mK,T\models \varphi$ holds for every team $T$ of $\mK$ and every $\varphi \in \Gamma$.  Likewise, we say that  $\Gamma$ is \emph{valid in a Kripke frame} $\fF$ and write $\fF\models\Gamma$, if  $(\fF,V)\models\Gamma$ hold for every valuation $V$. When $\Gamma$ is a singleton $\setof{\varphi}$, we simply write $\mK\models \varphi$ and $\fF\models\varphi$.
%

%

%

The formulas of $\ML$ have the following flatness property, see, e.g., \cite{dukovo16}.
\begin{proposition}[Flatness]\label{flatness}
\label{mlextends}
Let $\mK$ be a Kripke model and $T$ be a team of $\mK$. Then, for every formula $\varphi$ of $\ML(\Phi)$:
\(
\mK,T\models \varphi \,\text{ iff }\, \forall w\in T:\mK,w\Vdash \varphi.
\)
\end{proposition}
From flatness if follows that for every model $\mK$, frame $\fF$, and formula $\varphi$ of $\ML$, $\mK \Vdash \varphi$ iff $\mK\models \varphi$ and $\fF \Vdash \varphi$ iff $\fF\models \varphi$.

Recall from \Cref{sec:definability} what it means that a set of modal formulas defines a class of frames and models. All the related definitions can be adapted for logics with team semantics by simply substituting $\Vdash$ by $\models$.

The most important closure properties in the study of team-based logics are downward closure, union closure, and the concept of team bisimulation.
\begin{definition}
Let $\LL$ be some team-based modal logic, $\mK$ a Kripke model, and $T,S$ teams of $\mK$. We say that a formula $\varphi\in\LL$ is
\begin{enumerate}
\item \emph{downward closed} {if} $\mK, T \models\varphi$, whenever $\mK,S\models \varphi$ and $T\subseteq S$.
\item  \emph{union closed} {if} $\mK, T\cup S \models\varphi$, whenever $\mK, T\models \varphi$ and $\mK, S\models \varphi$.
\end{enumerate}  
A logic $\LL$ is called  \emph{downward closed} $($\emph{union closed}$)$ if every formula $\varphi\in\LL$ is {downward closed} $(${union closed}$)$.
We say that $\LL$ has the \emph{empty team} property, if $\mK,\emptyset \models \varphi$ holds for every model $\mK$ and every formula $\varphi\in\LL$.
\end{definition}
Team bisimulation and its finite approximation team $k$-bisimulation can be defined via the corresponding concepts of ordinary modal logic. In the definition below, we denote by $\bisim$ and $\kbisim$ the notions of bisimulation and $k$-bisimulation of ordinary modal logic (see, e.g., Definition \ref{def:bisim} and \cite[Definition 2.30]{blackburn:2001}), respectively.

\begin{definition}
Let $\mK,T$ and  $\mK',T'$ be team-pointed Kripke models. We say that $\mK,T$ and  $\mK',T'$ are team bisimilar, and write $\mK,T\Bisim\mK',T'$ if
\begin{enumerate}
\item for every $w\in T$ there exist some $w'\in T'$ such that $\mK,w\bisim\mK',w'$, and
\item for every $w'\in T'$ there exist some $w\in T$ such that $\mK,w\bisim\mK',w'$.
\end{enumerate}
The \emph{team $k$-bisimulation} relation $\kBisim$ is defined analogously with $\bisim$ replaced by $\kbisim$.
\end{definition}

\subsection{Extensions of modal logic via connectives}
We first introduce two expressive extensions of modal logic: an extension by the so-called \emph{intuitionistic disjunction} and an extension by the so-called \emph{contradictory negation}. These two logics are of great interest, since with respect to expressive power the logics subsume all most studied team-based modal logics, in particular all of those defined in Section \ref{sec:atoms}.

\emph{Modal logic with intuitionistic disjunction} $\ML(\idis)(\Phi)$ is obtained by extending the syntax of $\ML(\Phi)$ by the grammar rule $\varphi \ddfn (\varphi \idis \varphi)$ with the following semantics:
\[
\mK,T\models (\varphi\varovee\psi) \quad\Leftrightarrow\quad \mK,T\models\varphi \text{ or } \mK,T\models\psi.
\]
\emph{Modal team logic} $\MTL(\Phi)$ is obtained by extending the syntax of $\ML(\Phi)$ by the contradictory negation, i.e., the grammar rule $\varphi \ddfn \cneg \varphi$ with the following semantics:
\[
\mK,T\models \cneg\varphi \quad\Leftrightarrow\quad \mK,T\not\models\varphi.
\]
The following theorem for $\ML(\varovee)$ was proven by Hella et al. \cite{HeLuSaVi14} and for $\MTL$ by Kontinen et al. \cite{komu15}
\begin{theorem}\label{teambisthm}
A class $\cC$ of team-pointed Kripke models is definable by a single formula of
\begin{enumerate}
\item $\ML(\varovee)$ iff $\cC$ is downward closed, closed under team $k$-bisimulation, for some $k\in\mathbb{N}$, and admits the empty team property;
\item $\MTL$ iff $\cC$ is closed under team $k$-bisimulation, for some $k\in\mathbb{N}$.  
\end{enumerate}
\end{theorem}

\subsection{Extensions of modal logic with atomic dependency notions}\label{sec:atoms}
The syntaxes of \emph{modal dependence logic} $\MDL(\Phi)$ and \emph{extended modal dependence logic} $\EMDL(\Phi)$ are obtained by extending the syntax of $\ML(\Phi)$ by the following grammar rule for each $n\in \omega$:
\begin{align*}
&\text{For $\MDL$:}\quad &&\varphi \ddfn \dep{\varphi_1,\dots,\varphi_n,\psi}, \text{ where $\varphi_1,\dots,\varphi_n,\psi\in \Phi$}.\\
&\text{For $\EMDL$:}\quad &&\varphi \ddfn \dep{\varphi_1,\dots,\varphi_n,\psi}, \text{ where $\varphi_1,\dots,\varphi_n,\psi\in\ML(\Phi)$}.
\end{align*}
The intuitive meaning of the (modal) dependence atom $\dep{\varphi_1,\dots, \varphi_n,\psi}$ is that the truth value of the formula $\psi$ is completely determined by the truth values of $\varphi_1,\dots, \varphi_n$. The formal definition is given below:
\begin{align*}
\mK,T\models \dep{\varphi_1,\dots,\varphi_n,\psi} \;\Leftrightarrow\;& \forall w,v\in T: \bigwedge_{1 \leq i \leq n}(\mK,\{w\}\models\varphi_i \Leftrightarrow \mK,\{v\}\models\varphi_i)\\
& \text{implies }(\mK,\{w\}\models\psi\Leftrightarrow \mK,\{v\}\models\psi).
\end{align*}
The syntax of \emph{modal inclusion logic} $\MINC(\Phi)$ and \emph{extended modal inclusion logic} $\EMINC(\Phi)$ is obtained by extending the syntax of $\ML(\Phi)$ by the following grammar rule for each $n\in \omega$:
\[
\varphi \ddfn {\varphi_1,\dots,\varphi_n \subseteq \psi_1,\dots,\psi_n}, \text{ where $\varphi_1,\psi_1,\dots,\varphi_n,\psi_n\in\ML(\Phi)$}.
\]
In the additional grammar rules above for $\MINC$, we require that the formulas $\varphi_1,\psi_1,\dots,\varphi_n,\psi_n$ are proposition symbols in $\Phi$.
The meaning of the (modal) inclusion atom $\varphi_1,\dots,\varphi_n \subseteq \psi_1,\dots,\psi_n$ is that the truth values that occur in a given team for the tuple $\varphi_1,\dots,\varphi_n$ occur also as truth values for the tuple $\psi_1,\dots\psi_n$. The formal definition is given below:
\begin{align*}
\mK,T\models& \varphi_1,\dots,\varphi_n \subseteq \psi_1,\dots,\psi_n \\
 &\Leftrightarrow \forall w\in T \exists v\in T: \bigwedge_{1 \leq i \leq n}(\mK,\{w\}\models\varphi_i \Leftrightarrow \mK,\{v\}\models\psi_i).
\end{align*}

With respect to expressive power the following are known, see, e.g., \cite{dukovo16, hest15}:
\[
\ML < \MDL <\EMDL = \ML(\varovee) < \MTL
\]
\[
\ML < \MINC < \EMINC < \MTL.
\]
Thus Theorem \ref{teambisthm} holds also for $\EMDL$. An analogous theorem for $\EMINC$ is by Hella and Stumpf \cite{hest15}.
\begin{theorem}[\cite{hest15}]\label{teambisthmeminc}
A class $\cC$ of team-pointed Kripke models is definable by a single formula of $\EMINC$ iff $\cC$ is union closed, closed under team $k$-bisimulation, for some $k\in\mathbb{N}$, and admits the empty team property.
\end{theorem} 

The fact that  $\MINC < \EMINC$ holds is known but no published proof is known by the authors. Thus we present one here.
\begin{proposition}
With respect to expressive power $\MINC < \EMINC$.
\end{proposition}
\begin{proof}
For $\varphi\in \MINC(\{p\})$, let $\varphi^*$ denote the $\ML(\{p\})$-formula obtained from $\varphi$ by substituting each inclusion atom in $\varphi$ by the formula $(p\lor\neg p)$. Since $p\subseteq p$ is essentially the only inclusion atom in $\MINC(\{p\})$, it is easy to see that, for every $\varphi\in \MINC(\{p\})$,  $\varphi$ and $\varphi^*$ are equivalent.

Let $\mK=(W,R,V)$ be a Kripke $\{p\}$-model such that $W=\{1,2,3\}$, R=\{(1,2)\}, and $V(p)=\{1,2,3\}$. We claim that there does not exists a $\MINC$-formula that is equivalent with $p\subseteq \Diamond p$. For the sake of a contradiction, assume that $\psi\in \MINC$ is such a formula. Clearly $\mK,\{1,3\}\models p\subseteq \Diamond p$ and thus, by assumption, $\mK,\{1,3\}\models \psi$. By our observation above, $\mK,\{1,3\}\models \psi^*$ follows. Now since $\psi^*$ is an \ML-formula, it follows by \Cref{flatness} that $\mK,\{3\}\models \psi^*$. Thus  $\mK,\{3\}\models \psi$ and therefore  $\mK,\{3\}\models p\subseteq \Diamond p$. However clearly $\mK,\{3\}\not\models p\subseteq \Diamond p$, a contradiction.
\end{proof}

The following proposition is proven in the same way as the analogous propositions for first-order dependence logic \cite{va07} and inclusion logic  \cite{Galliani:2011}.
\begin{proposition}[Closure properties]\label{closures}
The logics weaker or equal to $\ML(\varovee)$ with respect to expressive power are downward closed. The logics weaker or equal to $\EMINC$ with respect to expressive power are union closed.
\end{proposition}
Note that $\MTL$ is neither downward nor union closed. The modal depth of $\varphi$, denoted by $\md{\varphi}$, is defined in the obvious way (for basic modal logic, see e.g., \cite{blackburn:2001}); intuitionistic disjunction and contradictory negation are handled in the same  manner as Boolean connectives. For dependence atoms and inclusion atoms, we define that
\begin{align*}
\md{\dep{\varphi_1,\dots,\varphi_n,\psi}} &:= \max\{\md{\varphi_1}, \dots, \md{\varphi_n},\md{\psi}\},\\ 
\md{\varphi_1,\dots,\varphi_n \subseteq \psi_1,\dots,\psi_n} &:= \max\{\md{\varphi_1},\md{\psi_1}, \dots, \md{\varphi_n}, \md{\psi_n}\}.
\end{align*}
If $\LL$ is a logic and $k\in\mathbb{N}$, we write  $\mK,T \equiv^\LL_k   \mK',T'$, if $\mK,T$ and $\mK',T'$ agree on all $\LL$-formulas $\varphi$ with $\md{\varphi}\leq k$.
\begin{theorem}[\!\!\cite{komu15}]\label{bisthm}
Let $\LL$ be a team-based logic that is weaker or equal to  $\MTL$ with respect to expressive power. Then
\(
\mK,T\kBisim\mK',T'\, \Rightarrow \, \mK,T \equiv^\LL_k   \mK',T'.
\)
\end{theorem}
\section{Modal definability in team semantics}
The expressive power of the most studied team-based modal logics is quite well understood. See Table \ref{table:exp} for the known characterisations. However the closely related topics of definability with respect to models and with respect to frames has not been studied before. 

\begin{table}
\begin{center}
\begin{tabular}{cccccc}\toprule
Logic & & \multicolumn{3}{c}{Closure properties} & References \\ \cmidrule{3-5}
&empty team& team & downward & union & \\
&property&k-bisimulation & closure & closure & \\\midrule
\ML & X & X & X & X & \cite{Hennessy1985}\\
$\ML(\varovee)$ & X & X & X &  & \cite[C. 3.6]{HeLuSaVi14}\\
\EMDL & X & X & X &  & \cite[C. 4.5]{HeLuSaVi14}\\
\EMINC & X & X &  & X & \cite[T. 3.10]{hest15} \\
\MTL & & X &  &  & \cite[T. 3.4]{komu15}
\\\bottomrule
\end{tabular}
\caption{Characterisation of expressive powers of different team-based logics. E.g., a class $\cC$ of team-pointed Kripke models is definable by a single $\EMDL$-formula if and only if $\mK,\emptyset\in\ \cC$, for every $\mK$, $\cC$ is closed under the so-called team k-bisimulation, for some finite $k$, and $\cC$ is downward closed.}
\label{table:exp}
\end{center}
\end{table}

\begin{table}\centering
\begin{tabular}{c}\toprule
$\{ \ML, \MINC, \EMINC \} \mle \MDL \mle \{\EMDL, \ML(\varovee), \MTL \}$ \\
$\{ \ML, \MINC, \EMINC \} \fle \{\MDL, \EMDL, \ML(\varovee), \MTL \}$ \\
\bottomrule
\end{tabular}
\caption{Hierarchy of model and frame definability of different modal logics with team semantics. The logics within a same set above are proven to coincide with respect to model or frame definability.}
\label{table:definability}
\end{table}

Recall the hierarchy of the team-based logics with respect to expressive power stated in the previous section; there are six distinct cases. In this section we show that with respect to model definability there are only three distinct cases whereas with respect to frame definability only two different cases remain. See Table \ref{table:definability} for the resulting hierarchies. In order to show that, with respect to model definability, $\EMINC$ collapses to $\ML$ and that $\MTL$ collapses to $\ML(\varovee)$, we need to introduce the concepts of Hintikka types from model theory of modal logic.

\subsection{Hintikka formulas and types}
It is well-known that for any finite set of proposition symbols $\Phi$, any finite $k\in\mathbb{N}$, and any pointed $\Phi$-model $(\mK,w)$, there exists a modal formula of modal depth $k$ that characterises $(\mK,w)$ completely up to $k$-equivalence (i.e. equivalence up to modal depth $k$). These \emph{Hintikka formulas} (or \emph{characteristic formulas})
are defined as follows (see e.g. \cite{Goranko2007}):

\begin{definition}
Assume that $\Phi$ is a finite set of proposition symbols.
Let $k\in\mathbb{N}$ and let $(\mK,w)$ be a pointed $\Phi$-model. The \emph{$k$-th Hintikka formula}
$\chi^k_{\mK,w}$ of $(\mK,w)$ is defined recursively as follows:
\begin{itemize}
\item $\chi^0_{\mK,w}:=\bigwedge \{p\mid p\in \Phi, w\in V(p)\}\land\bigwedge\{\lnot p\mid p\in\Phi, w\not\in V(p)\}$.
\item $\chi^{k+1}_{\mK,w}:=\chi^k_{\mK,w}\land \bigwedge_{v\in R[w]}\Diamond\chi^k_{\mK,v}
\land\Box \bigvee_{v\in R[w]}\chi^k_{\mK,v}$.
\end{itemize}
\end{definition}

It is easy to see that $\md{\chi^k_{\mK,w}}=k$, and $\mK,w\Vdash\chi^k_{\mK,w}$
for every pointed $\Phi$-model $(\mK,w)$. By a straightforward inductive argument, it can be shown that $\chi^k_{\mK,w}$ is essentially finite. Moreover it can be shown that, for each fixed $k$ and $\Phi$, there exists only finitely many non-equivalent  $k$-th Hintikka formulas.

\begin{proposition}[see, e.g., \cite{Goranko2007}]\label{hintikka}
Let $\Phi$ be a finite set of proposition symbols, $k\in\mathbb{N}$, and $(\mK,w)$ and $(\mK',w')$ pointed $\Phi$-models.
Then
\[
	 \mK,w\equiv^\ML_k \mK',w'\quad\iff\quad \mK,w\kbisim \mK',w'\quad\iff\quad \mK',w'\Vdash\chi^k_{\mK,w}.
\]
\end{proposition}
Note that from above it follows that, up to equivalence, each pointed model $\mK,w$ satisfies exactly one $k$-th Hintikka formula, namely the formula $\chi^k_{\mK,w}$.
\begin{definition}
Let $\mK$ be a Kripke $\Phi$-model and $\cC$ a class of Kripke $\Phi$-models. We define that
\begin{align*}
\type_k(\mK) \dfn& \{ \htype^k_{\mK,w} \mid  \text{$w$ is a point of $\mK$}\},\\
\type_k(\mK,T) \dfn& \{ \htype^k_{\mK,w} \mid w\in T \},\\
\type_k(\cC) \dfn& \{ \type_k(\mK) \mid  \mK\in\cC \}.
\end{align*}
\end{definition}

\begin{proposition}\label{typetoeq}
Let $\LL$ be any team-based logic weaker than or equal to $\MTL$ w.r.t. expressive power. Then
\(
\type_k(\mK,T) = \type_k(\mK',T') \, \Rightarrow \, \mK,T \equiv^\LL_k   \mK',T'.
\)
\end{proposition}
\begin{proof}
Assume that $\type_k(\mK,T) = \type_k(\mK',T')$. By  Proposition $\ref{hintikka}$ and the definition of team bisimulation, it follows that  $\mK,T\kBisim\mK',T'$. The claim now follows by \Cref{bisthm}.
\end{proof}

\subsection{Definability with respect to models}
For sets of logics $\mathcal{A}$ and $\mathcal{B}$, we write $\mathcal{A}\mle \mathcal{B}$ if for each $\mathcal{L}_1,\mathcal{L}_2\in \mathcal{A}$ and $\mathcal{L}_3,\mathcal{L}_4\in \mathcal{B}$ it holds that $\mathcal{L}_1\meq \mathcal{L}_2\mle \mathcal{L}_3\meq \mathcal{L}_4$. For a singleton set $\{L\}$, we write simply $\mathcal{L}$.
The objective of this section is to prove the following trichotomy:
\[
\{ \ML, \MINC, \EMINC \} \mle \MDL \mle \{\EMDL, \ML(\varovee), \MTL \}.
\]
We will first establish that $\ML \mle \MDL \mle \EMDL$. We will then show that $\ML\meq\EMINC$ and finally that $\ML(\varovee)\meq \MTL$.
Since by the work of Hella et al. \cite{HeLuSaVi14} $\EMDL=\ML(\idis)$, already with respect to expressive power, the trichotomy follows. 

We will first show that $\ML \mle \MDL$ and that $\MDL \mle \EMDL$.
\begin{proposition}\label{prop:ml<mdl}
$\ML \mle \MDL$.
\end{proposition}
\begin{proof}
Let $\mK_i=(W_i,R_i,V_i)$, $i\leq 2$, be $\Phi$-models such that $W_0=\{1,2\}$,  $W_1=\{1\}$,  $W_2=\{2\}$, $R_0=R_1=R_2=\emptyset$, and, for each $p\in\Phi$, $V_0(p)=V_1(p)=\{1\}$, and $V_2(p)=\emptyset$. It is easy to conclude by flatness of $\ML$ that
\[
\mK_0\in\modlss(\varphi) \text{ iff } \mK_1,\mK_2\in\modlss(\varphi)
\]  
holds for every $\varphi\in\ML$. Thus 
\[
\mK_0\in\modlss(\Gamma) \text{ iff } \mK_1,\mK_2\in\modlss(\Gamma)
\]  
holds for every $\Gamma\subseteq\ML$. 
However $\mK_1,\mK_2\in \modlss(\dep{p})$ but  $\mK_0\not\in\modlss(\dep{p})$. Thus we conclude that $\modlss(\dep{p})$ is not definable in $\ML$.
\end{proof}

\begin{proposition}\label{prop:mdl<emdl}
$\MDL \mle \EMDL$.
\end{proposition}
\begin{proof}
Let $\mK_i=(W_i,R_i,V_i)$, $i\leq 2$, be $\Phi$-models such that $W_0=\{1,2\}$,  $W_1=\{1\}$,  $W_2=\{2\}$, $R_0=\{(1,1)\}$, $R_1=\{(1,1)\}$,  $R_2=\emptyset$, and, for each $p\in\Phi$, $V_0(p)=\{1,2\}$, $V_1(p)=\{1\}$, and $V_2(p)=\{2\}$.
It is easy to conclude (see \cite[Theorem 1]{EHMMVV13} for details) that
\[
\mK_0\in\modlss(\varphi) \text{ iff } \mK_1,\mK_2\in\modlss(\varphi)
\]  
holds for every $\varphi\in\MDL$. Thus 
\[
\mK_0\in\modlss(\Gamma) \text{ iff } \mK_1,\mK_2\in\modlss(\Gamma)
\]  
holds for every $\Gamma\subseteq\MDL$. 
However $\mK_1,\mK_2\in \modlss(\dep{\Diamond p})$ but  $\mK_0\not\in\modlss(\dep{\Diamond p})$. Thus we conclude that $\modlss(\dep{\Diamond p})$ is not definable in $\MDL$.
\end{proof}

We continue by establishing that every $\EMINC$-definable class of models is also definable in $\ML$.

\begin{lemma}\label{MINCbytypes}
Let $\Phi$ be a finite set of proposition symbols, $\varphi\in\EMINC(\Phi)$, and $k=\md{\varphi}$. Then
\(
 \mK\in\modlss(\varphi) \text{ iff }\type_k(\mK) \subseteq \bigcup\{\type_k(\mK') \mid \mK' \in \modlss(\varphi) \}.
\)
\end{lemma}
\begin{proof}
The direction from left to right is trivial. Assume then that
\begin{equation}\label{eq:1}
\type_k(\mK) \subseteq \bigcup\{\type_k(\mK') \mid \mK' \in \modlss(\varphi)\}
\end{equation}
holds, and let $T$ be an arbitrary team of $\mK$. It suffices to establish that $\mK,T\models\varphi$. From \eqref{eq:1} it follows that there exists some $n\in\mathbb{N}$, models $\mK_i\in\modlss(\varphi)$, teams $S_i$ of $\mK_i$ and $T_i$ of $\mK$, $i\leq n$, such that
\[
T_1\cup\dots\cup T_n = T \text{ and } \type_k(\mK_i,S_i)=\type_k(\mK,T_i),  \text{ for each $i\leq n$}.
\]
Note that such finite $n$ exists, since $\type_k(\mK)$ is essentially finite.
Since each $\mK_i\in\modlss(\varphi)$, it follows that $\mK_i,S_i\models \varphi$, for $i\leq n$. Thus from Proposition \ref{typetoeq} and the fact that $\type_k(\mK_i,S_i)=\type_k(\mK,T_i)$, for $i\leq n$, it follows that  $\mK,T_i\models \varphi$, for  $i\leq n$. Now, by union closure (\Cref{closures}), we conclude that  $\mK,T\models \varphi$.
\end{proof}

\begin{theorem}\label{MINCisMLs}
A class $\cC$ of Kripke models is definable by a single $\EMINC$-formula if and only if the class is definable by a single $\ML$-formula.
\end{theorem}
\begin{proof}
The if direction is trivial. For the other direction, let  $\cC$ be a class of Kripke models that is definable by a single $\EMINC$ formula and let $\varphi$ be an $\EMINC(\Phi)$-formula that defines $\cC$. Without lose of generality, we may assume that $\Phi$ is finite. Let $k$ denote the modal depth of $\varphi$. We will show that the $\ML(\Phi)$ formula
\[
\varphi^* \dfn \bigvee \{\chi^k_{\mK,w} \mid \mK\in \modlss(\varphi), w\in\mK  \}
\]
defines $\cC$.
Since over a finite set of proposition symbols there exists only finitely many essentially different $k$-Hintikka-formulas, $\varphi^*$ is essentially a finite $\ML(\Phi)$ -formula. By assumption $\cC=\modlss(\varphi)$. Thus by Lemma \ref{MINCbytypes}
\begin{equation}\label{eq:2}
\mK\in\cC  \text{ iff }\type_k(\mK) \subseteq \bigcup\{\type_k(\mK') \mid \mK' \in \modlss(\varphi) \}.
\end{equation}
Observe that by flatness (Proposition \ref{flatness}) and the fact that each pointed Kripke model satisfies only its own $k$-Hintikka-formula
\[
\mK,T \models \varphi^*  \text{ iff } \type_k(\mK,T) \subseteq \bigcup\{\type_k(\mK') \mid \mK' \in \modlss(\varphi) \},
\]
and thus it follows that
\begin{equation}\label{eq:3}
\mK \models \varphi^*  \text{ iff }\type_k(\mK) \subseteq \bigcup\{\type_k(\mK') \mid \mK' \in \modlss(\varphi) \}.
\end{equation}
From \eqref{eq:2} and \eqref{eq:3} the claim follows.
\end{proof}
The following theorem directly follow.
\begin{theorem}\label{MINCisML}
A class $\cC$ of Kripke models is $\EMINC$-definable  if and only if it is $\ML$ definable.
\end{theorem}

Finally we show that every $\MTL$ definable class of models is also definable in $\ML(\varovee)$.
\begin{lemma}\label{typelemma}
Let $\varphi$ be and $\MTL$-formula and $k=\md \varphi$. Then
\[
\mK \in \modlss(\varphi) \text{ iff } \type_k(\mK)\subseteq \Gamma\in \type_k\big(\modlss(\varphi)\big), \text{ for some $\Gamma$}.
\]
\end{lemma}
\begin{proof}
The direction from left to right is trivial. Assume then that  $\type_k(\mK)\subseteq \Gamma\in \type_k\big(\modlss(\varphi)\big)$ holds for some $\Gamma$. Thus there exists a Kripke model $\mK'$ such that $\mK' \in \modlss(\varphi)$ and  $\type_k(\mK')=\Gamma$. For the sake of a contradiction, assume that $\mK\not\in \modlss(\varphi)$. Thus there exists a team $T$ of $\mK$ such that $\mK,T\not\models\varphi$. Since $\type_k(\mK) \subseteq \type_k(\mK')$ it follows that there exists a team $T'$ of $\mK'$ such that  $\type_k(\mK,T) = \type_k(\mK',T')$. Thus by Proposition \ref{typetoeq}, we conclude that  $\mK',T'\not\models\varphi$. This is a contradiction and thus $\mK\in \modlss(\varphi)$ holds.
\end{proof}

\begin{theorem}\label{th:modeldefinable}
A class $\cC$ of Kripke models is definable in $\MTL$ by a single formula if and only if it is definable in $\ML(\varovee)$ by a single formula.
\end{theorem}
\begin{proof}
The fact that every class of Kripke models that is definable by a single $\ML(\varovee)$-formula is also definable by a single $\MTL$-formula follows directly by \Cref{teambisthm}.

Let $\cC$ be an arbitrary single formula $\MTL$-definable class of Kripke models and let $\varphi$ be an $\MTL$-formula that defines $\cC$. Let $k$ denote the modal depth of $\varphi$. We will show that the $\ML(\varovee)$-formula
\[
\varphi^* \dfn \Idis_{\Gamma \in \type_k(\cC)}  \big( \bigvee \Gamma  \,\big)
\]
defines $\cC$. Note that since $\type_k(\cC)$ is a family of sets of $k$-Hintikka formulas the outer disjunction is essentially finite. Likewise, since each $\Gamma$ is a collection of $k$-Hintikka formulas, it follows by flatness (remember that Hintikka formulas are $\ML$-formulas) that the inner disjunctions are essentially finite. Thus $\varphi^*$ is essentially a finite $\ML(\idis)$-formula.

Assume first that $\mK\in \cC$. By definition $\type_k(\mK)\in \type_k(\cC)$. Clearly, for each team $T$ of $\mK$, it holds that $\mK,T\models \bigvee \type_k(\mK)$, and thus that $\mK,T\models\varphi^*$. Therefore $\mK\models\varphi^*$. Assume then that $\mK\models \varphi^*$. Thus $\mK,W\models\varphi^*$, where $W$ is the domain of $\mK$. Therefore there exists a set $\Gamma\in \type_k(\cC)$ such that $\mK,W\models \bigvee\Gamma$. Thus $\type_k(\mK)= \type_k(\mK,W)\subseteq \Gamma$. Recall that $\cC=\modlss(\varphi)$.  Now since $\Gamma\in\type_k(\cC)=\type_k\big(\modlss(\varphi)\big)$, it follows from Lemma \ref{typelemma} that $\mK\in\modlss (\varphi)=\cC$.
\end{proof}
The following theorem directly follows.
\begin{theorem}\label{MTLisMDL}
A class $\cC$ of Kripke models is definable in $\MTL$ if and only if it is definable in $\ML(\varovee)$
\end{theorem}

Now by Propositions \ref{prop:ml<mdl} and \ref{prop:mdl<emdl}, by Theorems \ref{MINCisML} and \ref{MTLisMDL}, and the fact that $\EMDL\meq\ML(\idis)$  \cite{HeLuSaVi14}, we obtain the following trichotomy.

\begin{theorem}\label{thm:morder}
\(
\{ \ML, \MINC, \EMINC \} \mle \MDL \mle \{\EMDL, \ML(\varovee), \MTL \}.
\)
\end{theorem}

\subsection{Definability with respect to frames}
We now shift from model definability to frame definability. The objective of this section is to establish that the trichotomy of model definability  (Theorem \ref{thm:morder}) can be strengthened to the following dichotomy of frame definability.
\[
\{ \ML, \MINC, \EMINC \} \fle \{\MDL, \EMDL, \ML(\varovee), \MTL \}.
\]
It is easy to show that equality with respect to $\meq$ implies equality with respect to $\feq$:
\begin{lemma}\label{eqlemma}
Let $\LL$ and $\LL'$ be logics such that $\LL\meq\LL'$. Then  $\LL\feq\LL'$.
\end{lemma}
\begin{proof}
By symmetry it suffices to show that $\LL\fleq\LL'$.
Let $\fF$ be a Kripke frame, $\varphi$ an $\LL$-formula and $\varphi^*$ the related $\LL'$-formula such that $\modlss(\varphi)=\modlss(\varphi^*)$.  Now, by definition, $\fF \models \varphi$ if and only if $(\fF,V) \models \varphi$ for every valuation $V$. Since $\modlss(\varphi)=\modlss(\varphi^*)$, this holds if and only if  $(\fF,V) \models \varphi^*$ for every valuation $V$, which by definition holds if and only if $\fF \models \varphi^*$.
Now let $\cF$ be some $\LL$-definable class of Kripke frames and let $\Gamma$ be a set of $\LL$-formulas that defines $\cF$. Define $\Gamma^*\dfn \{\varphi^* \mid \varphi\in\Gamma\}$. Clearly $\Gamma^*$ is a set of $\LL'$-formulas that defines $\cF$.
\end{proof}
The only thing that is left to show is that with respect to frame definability $\MDL$ and $\EMDL$ conicide.
\begin{proposition}\label{mdlemdl}
Let $\Phi$ be an infinite set of proposition symbols. For every formula $\varphi\in \EMDL(\Phi)$ there exists a formula $\varphi^*\in\MDL(\Phi)$ such that $\fF\models\varphi$ iff $\fF\models\varphi^*$ for every frame $\fF$. 
\end{proposition}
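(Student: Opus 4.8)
The plan is to establish the nontrivial inclusion: given $\varphi\in\EMDL(\Phi)$, construct $\varphi^*\in\MDL(\Phi)$ with $\fF\models\varphi$ iff $\fF\models\varphi^*$ for every frame $\fF$ (the reverse containment is immediate, since every $\MDL$-formula is already an $\EMDL$-formula). Since $\Phi$ is infinite, I would first list the non-atomic $\ML$-formulas $\chi_1,\dots,\chi_m$ occurring as arguments of dependence atoms in $\varphi$, choose pairwise distinct fresh symbols $q_1,\dots,q_m\in\Phi$ not occurring in $\varphi$, and let $\varphi^\flat\in\MDL(\Phi)$ be the formula obtained by replacing, inside every dependence atom, each $\chi_j$ by $q_j$. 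The naive guess $\varphi^*:=\varphi^\flat$ fails, because frame validity ranges over all valuations and a valuation in which $q_j$ does not track the truth value of $\chi_j$ can spuriously falsify $\varphi^\flat$ even when $\varphi$ is valid (already $\dep{\Diamond p}$ versus $\dep{q}$ separates them on a two-point irreflexive frame). The construction must therefore neutralize exactly those valuations.

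The key simplification is that both logics are downward closed (Proposition \ref{dc}): for a downward-closed $\psi$ we have $\fF\models\psi$ iff $(\fF,V),W\models\psi$ for every valuation $V$, where $W:=|\fF|$ is the full team. Writing $d$ for the modal depth of $\varphi^\flat$, I would set $\beta := \bigvee_{k=0}^{d}\Diamond^{k}\bigvee_{j=1}^{m}\big((q_j\wedge\neg\chi_j)\vee(\neg q_j\wedge\chi_j)\big)$, an $\ML$-formula, and define $\varphi^* := \beta\vee\varphi^\flat\in\MDL(\Phi)$. As $\beta\in\ML$, it is flat (Proposition \ref{mlextends}): on a team $S$ it holds iff every point of $S$ can reach, in at most $d$ steps, a point at which some $q_j$ disagrees with $\chi_j$. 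Thus $\beta$ is built to absorb precisely the points from which a mismatch is visible to $\varphi^\flat$, while on the complementary team $q_j$ and $\chi_j$ agree throughout the relevant $\le d$-neighborhood.

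The heart of the argument is a coincidence lemma, proved by induction on subformulas: if a team $S$ is such that $q_j$ and $\chi_j$ agree at every point reachable from $S$ in at most $e$ steps, where $e$ bounds the modal depth of $\sigma^\flat$, then $(\fF,V),S\models\sigma$ iff $(\fF,V'),S\models\sigma^\flat$, with $V$ the restriction of $V'$ to the original symbols (the invariant shrinking $e$ by one at each $\Box$ or $\Diamond$). For the forward direction, assuming $\fF\models\varphi$, I split $W$ under any $V'$ into the set $B$ of points satisfying $\beta$ and its complement $G$; then $\beta$ holds on $B$ by construction, whereas on $G$ no mismatch is reachable within $d$ steps, so the coincidence lemma together with $\fF\models\varphi$ yields $(\fF,V'),G\models\varphi^\flat$, hence $(\fF,V'),W\models\varphi^*$. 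For the converse, given any $V$ I take the canonical extension $V^+$ with $V^+(q_j)=\{w : \fF,V,w\Vdash\chi_j\}$; under $V^+$ every point matches, so $\beta$ holds only on the empty team, whence $\fF\models\varphi^*$ forces $(\fF,V^+),W\models\varphi^\flat$, and the coincidence lemma gives $(\fF,V),W\models\varphi$. Downward closure then delivers $\fF\models\varphi$.

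The main obstacle is exactly the modal-reach phenomenon dictating the shape of $\beta$: it does not suffice to absorb the points where $q_j$ itself mismatches $\chi_j$, because the $\Box$ and $\Diamond$ of $\varphi^\flat$ can descend from a matching point into a mismatching successor and there corrupt a dependence atom. Absorbing every point that can reach a mismatch within the modal depth $d$ is what makes the coincidence lemma applicable on the surviving team $G$, and getting this bookkeeping right—relating the remaining modal depth to the radius of guaranteed agreement between the $q_j$ and the $\chi_j$—is the crux of the proof.
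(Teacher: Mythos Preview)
Your construction is correct and rests on the same mechanism as the paper's sketch: replace the $\ML$-arguments of dependence atoms by fresh proposition symbols and neutralize the ``bad'' valuations by a split disjunction with an agreement guard. The paper unfolds its implication $\big(\bigwedge_{i\le k}\Box^{i}\bigwedge_j (p_j\leftrightarrow\psi_j)\big)\rightarrow\dep{p_1,\dots,p_n}$ to exactly your shape $\neg(\cdots)\lor\dep{p_1,\dots,p_n}$, i.e., a $\beta$-like mismatch disjunct followed by the propositional dependence atom. The one genuine organizational difference is placement: the paper's translation is compositional and inserts the guard \emph{locally} at each dependence atom (so the bound $k$ there is the depth of that atom), whereas you substitute once globally and hoist a single guard $\beta$ to the top, with the bound $d$ the modal depth of the whole $\varphi^\flat$. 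Your version makes the two ingredients that drive the equivalence---downward closure (so that validity reduces to the full team) and the depth-indexed coincidence lemma---fully explicit, which the paper leaves as ``straightforward''. Both routes yield the same result; yours is a clean global packaging of the paper's local trick.
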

\begin{proof}
We give a sketch of the proof here. A similar proof is given in \cite[Proposition 5.8]{virtema14}. The translation $\varphi \mapsto \varphi^*$ is defined inductively in the following way. For (negated) proposition symbols the translation is the identity. For propositional connectives and modalities we define
\[
(\psi_1 \oplus \psi_2) \mapsto (\psi_1^* \oplus \psi_2^*), \quad\text{and}\quad \nabla \psi \mapsto \nabla \psi^*,
\]
where $\oplus\in\{\wedge,\vee\}$ and $\nabla\in \{\Diamond, \Box\}$. The only nontrivial case is the case for the dependence atoms. Let $\varphi$ be the dependence atom $\dep{\psi_1,\dots,\psi_n}$, let $k$ be the modal depth of $\varphi$, and let $p_1,\dots,p_n$ be distinct fresh proposition symbols. Define
\[
\varphi^* \dfn \big(\bigwedge_{0\leq i\leq k} \Box^i \bigwedge_{1\leq j\leq n} (p_j\leftrightarrow \psi_j) \big) \rightarrow  \dep{p_0,\dots,p_n}. 
\]
It is now straightforward to show that the claim follows.
\end{proof}

Now from Theorem \ref{thm:morder}, Lemma \ref{eqlemma}, and Proposition \ref{mdlemdl}, we obtain the desired dichotomy.
\begin{theorem}\label{F:order}
$\{ \ML, \MINC, \EMINC \} \fle \{\MDL, \EMDL, \ML(\varovee), \MTL \}$. 
\end{theorem}

\section{Connecting team semantics and universal modality}\label{sec:mlandum}

Recall the model theoretic characterisations of model definability and frame definability for $\ML$, i.e.,  Theorems \ref{MLmodels}, \ref{gbth}, and \ref{fingbth}. By Theorems \ref{thm:morder} and \ref{F:order}, we directly obtain the corresponding characterisations for \MINC and $\EMINC$. Now recall the corresponding characterisations for $\ML(\uBoxp)$, i.e.,  Theorems \ref{MLupmodels}, \ref{GbTh4mlup}, and \ref{thm:GbTh4mlup}. In this section we will show that with respect to model and frame definability $\ML(\uBoxp)$ and $\ML(\varovee)$ coincide. Thus we obtain model theoretic characterisations of model definability for $\EMDL$, $\ML(\varovee)$ and $\MTL$. Similarly we obtain model theoretic characterisations of frame definability for $\MDL$, $\EMDL$, $\ML(\varovee)$ and $\MTL$. See Tables \ref{table:models}-\ref{table:finframe} for an overview of the characterisations. 

\begin{table}\centering
\scalebox{0.9}{
\begin{tabular}{cccccc}\toprule
Logic & \multicolumn{3}{c}{Closure under} & Elementary &References \\ \cmidrule{2-4}
&disjoint& surjective & total surjective  & &\\
&unions& bisimulations & bisimulations &  &\\\midrule
\ML & X & X & X  & X  & \cite{derijke01} \\
\MINC &  &  &  & & Thm. \ref{MINCisML}  \\
\EMINC &  &  &  & & Thm. \ref{MINCisML} \\ \cmidrule{1-1} \cmidrule{6-6}
$\ML(\uBoxp)$ &  & X & X & X  &  Cor. \ref{MLupmodelsb} \\
$\ML(\varovee)$ &  &  &  &  & Thm. \ref{midis is uboxp} \\
\EMDL &  &  &  & &  Thm. \ref{thm:morder}\\
\MTL &  &  &  &  & Thm. \ref{thm:morder} \\\cmidrule{1-1} \cmidrule{6-6}
$\ML(\uBox)$ &  &  & X & X & \cite{perkov12}
\\\bottomrule\\
\end{tabular}
}
\caption{Characterisation of model definability of different modal logics. Note that total surjectice bisimulations are special cases of surjective bisimulations. E.g., a class $\cC$ of Kripke models is definable in $\EMDL$  if and only if  $\cC$ is elementary, and closed under surjective bisimulations.}
\label{table:models}
\end{table}

\begin{table}\centering
\scalebox{0.9}{
\begin{tabular}{cccccccc}\toprule
Logic & \multicolumn{3}{c}{Closure under} &  & \multicolumn{2}{c}{Reflects} & References \\ \cmidrule{2-4}  \cmidrule{6-7}
&disjoint& bounded mor- & generated  && ultrafilter & finitely gene- & \\
&unions& phic images & subframes && extensions & rated subframes & \\\midrule
\ML & X & X & X &  & X & X\footnote{} & \cite{GT1975} \\
\MINC &  &  &  &  &  & & Thm. \ref{F:order}  \\
\EMINC &  &  &  &  &  & & Thm. \ref{F:order} \\ \cmidrule{1-1} \cmidrule{8-8}
$\ML(\uBoxp)$ &  &  &  &  &  & & Thm. \ref{GbTh4mlup} \\
$\ML(\varovee)$ &  &  &  &  &  & &Thm. \ref{idis is uboxp} \\
\MDL &  & X & X &  & X & X & Thm. \ref{F:order} \\
\EMDL &  &  &  &  &  &  &  Thm. \ref{F:order} \\
\MTL &  &  &  &  &  &  & Thm. \ref{F:order} \\\cmidrule{1-1} \cmidrule{8-8}
$\ML(\uBox)$ &  & X &  &  & X & & \cite[Cor. 3.9]{Goranko92}
\\\bottomrule\\
\end{tabular}
}
\caption{Characterisation of frame definability of different modal logics with respect to first-order definable frame classes. E.g., an elementary class $\cF$ of Kripke frames is definable in $\EMDL$ if and only if  $\cF$ is closed under taking generated subframes and bounded morphic images, and reflects ultrafilter extensions and finitely generated subframes.}
\label{table:frame1}
\end{table}
\footnotetext{If a class of frames is closed under disjoint unions and bounded morphic images then it reflects finitely generated subframes.}

\begin{table}\centering
\scalebox{0.9}{
\begin{tabular}{ccccc}\toprule
Logic & \multicolumn{3}{c}{Closure under} &  References \\ \cmidrule{2-4}
&disjoint& bounded morphic & generated  & \\
&unions& images & subframes & \\\midrule
\ML & X & X & X  &  \cite{Benthem1993} \\
\MINC &  &  &  & \Cref{F:order}  \\
\EMINC &  &  &  & \Cref{F:order} \\ \cmidrule{1-1} \cmidrule{5-5}
$\ML(\uBoxp)$ &  &  &  &  \Cref{thm:GbTh4mlup} \\
$\ML(\varovee)$ &  &  &  &  \Cref{idis is uboxp} \\
\MDL &  & X & X &  \Cref{F:order}\\
\EMDL &  &  &  &    \Cref{F:order} \\
\MTL &  &  &  &  \Cref{F:order} \\\cmidrule{1-1} \cmidrule{5-5}
$\ML(\uBox)\footnote{}$ &  & X &  & \cite{GG1993}
\\\bottomrule\\
\end{tabular}
}
\caption{Characterisation of relative frame definability of different modal logics within the class of finite transitive frames. E.g., a class $\cF$ of finite transitive frames is definable in $\EMDL$ within the class of finite transitive frames if and only if  $\cF$ is closed under taking generated subframes and bounded morphic images.}
\label{table:finframe}
\end{table}
\footnotetext{The characterisation for $\ML(\uBox)$ holds already within the class of finite frames}

We start with a normal form for $\ML(\varovee)$.
\begin{definition}
We say that an $\ML(\idis)$-formula $\varphi $ is in \emph{$\idis$-normal form} if
\(
\varphi=\psi_1\idis\psi_2\idis\dots\idis\psi_n
\)
for some $n\in\omega$ and $\psi_1,\psi_2,\dots,\psi_n\in \ML(\Phi)$.
\end{definition}

\begin{proposition}[$\idis$-normal form, \cite{virtema14, fanthesis}]
\label{disnormalform}
For every $\ML(\idis)$-formula $\varphi$ there exists an equivalent formula in $\idis$-normal form.
\end{proposition}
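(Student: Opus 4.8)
The plan is to argue by structural induction on $\varphi$, maintaining the invariant that every $\ML(\idis)$-formula is equivalent (with respect to $\equiv_T$) to a finite $\idis$-disjunction of $\ML$-formulae, which is exactly the $\idis$-normal form. The base cases are the literals $p$ and $\neg p$; each already belongs to $\ML$ and is therefore a one-disjunct formula in $\idis$-normal form. The engine of the induction is a collection of distribution laws that allow $\idis$ to be pulled outwards past every remaining constructor. Concretely, working directly from the team-semantic clauses, I would first verify that the following equivalences hold for all formulae $\alpha,\beta,\gamma$:
\[
(\alpha\idis\beta)\land\gamma \;\equiv_T\; (\alpha\land\gamma)\idis(\beta\land\gamma),
\qquad
(\alpha\idis\beta)\lor\gamma \;\equiv_T\; (\alpha\lor\gamma)\idis(\beta\lor\gamma),
\]
\[
\Diamond(\alpha\idis\beta) \;\equiv_T\; \Diamond\alpha\idis\Diamond\beta,
\qquad
\Box(\alpha\idis\beta) \;\equiv_T\; \Box\alpha\idis\Box\beta.
\]
Since $\idis$ is plainly associative, commutative and idempotent in team semantics, these binary laws iterate to full distributivity over arbitrary finite $\idis$-disjunctions.

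Granting these laws, the inductive step is uniform. Suppose the induction hypothesis supplies $\psi\equiv_T\alpha_1\idis\cdots\idis\alpha_m$ and $\chi\equiv_T\beta_1\idis\cdots\idis\beta_n$ with all $\alpha_i,\beta_j\in\ML$. For $\varphi=\psi\idis\chi$ there is nothing to do: the concatenation $\alpha_1\idis\cdots\idis\alpha_m\idis\beta_1\idis\cdots\idis\beta_n$ is already in normal form. For $\varphi=\psi\land\chi$ and $\varphi=\psi\lor\chi$, the conjunction and disjunction laws give that $\varphi$ is equivalent to the finite $\idis$-disjunction of all $\alpha_i\land\beta_j$ (respectively $\alpha_i\lor\beta_j$), each of which is in $\ML$. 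For $\varphi=\Diamond\psi$ and $\varphi=\Box\psi$, the modal laws give $\varphi\equiv_T\Diamond\alpha_1\idis\cdots\idis\Diamond\alpha_m$ (respectively with $\Box$), and each $\Diamond\alpha_i$, $\Box\alpha_i$ is again an $\ML$-formula. In every case the result is a finite $\idis$-disjunction of $\ML$-formulae, completing the induction.

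The bulk of the work, and the only genuinely non-routine part, is establishing the four distribution laws, and of these the disjunction law $(\alpha\idis\beta)\lor\gamma\equiv_T(\alpha\lor\gamma)\idis(\beta\lor\gamma)$ is the main obstacle: unlike $\Box$, whose clause deterministically passes to the successor team $R[T]$, and unlike $\Diamond$, which merely quantifies existentially over successor teams, the $\lor$-clause must coordinate an existential choice of a team split $T=T_1\cup T_2$ with the disjunctive choice made inside $\alpha\idis\beta$ on the part $T_1$. I would prove both inclusions by hand: from left to right, a witnessing split together with whichever disjunct $T_1$ satisfies yields a witness for the corresponding $\alpha\lor\gamma$ or $\beta\lor\gamma$; from right to left, a witness for $\alpha\lor\gamma$ (say) is immediately a witness for $(\alpha\idis\beta)\lor\gamma$, since satisfying $\alpha$ entails satisfying $\alpha\idis\beta$ on the same subteam. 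The remaining three laws, together with the associativity, commutativity, and idempotence of $\idis$, are direct unfoldings of the semantic clauses.
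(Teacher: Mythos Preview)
Your argument is correct: the four distribution laws hold exactly as you state (each is a direct unfolding of the team-semantic clauses, including the $\lor$-case, which is no harder than you indicate), and the induction then goes through. Note, however, that the paper does not give its own proof of this proposition; it merely cites it from \cite{virtema14,fanthesis}, so there is no in-paper argument to compare against. For what it is worth, the proof in those references proceeds along the same lines as yours---structural induction driven by pulling $\idis$ outward past $\land$, $\lor$, $\Diamond$, $\Box$---so your write-up matches the standard argument.
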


\begin{lemma}
\label{kvalid_eq_tvalid}
For every $\ML$-formula $\varphi$ and model $\mK$: $\mK \Vdash \uBox \varphi$ iff $\mK,W \models \varphi$.
\end{lemma}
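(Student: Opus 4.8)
The plan is to show that both sides of the claimed equivalence unfold to the very same condition, namely ``$\mK,w\Vdash\varphi$ for every $w\in W$'', so the lemma will follow immediately once each side is rewritten. The only external fact I would invoke is Proposition~\ref{mlextends}, which tells us how team semantics and pointed semantics relate for plain $\ML$-formulae.

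First I would unfold the left-hand side. By the definition of validity in a model, $\mK\Vdash\uBox\varphi$ means that $\mK,w\Vdash\uBox\varphi$ holds for every $w\in W$. Fixing any such $w$ and applying the semantic clause for the universal modality, $\mK,w\Vdash\uBox\varphi$ holds if and only if $\mK,v\Vdash\varphi$ for every $v\in W$. Crucially, this condition does not mention $w$ at all, so quantifying it over all $w\in W$ (recall $W$ is non-empty by the definition of a frame) changes nothing: we obtain that $\mK\Vdash\uBox\varphi$ holds precisely when $\mK,v\Vdash\varphi$ for every $v\in W$.

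Next I would rewrite the right-hand side. Since $\varphi$ is an $\ML$-formula, Proposition~\ref{mlextends} applies to it for an arbitrary team; instantiating the team as the full domain $T=W$ gives that $\mK,W\models\varphi$ holds if and only if $\mK,w\Vdash\varphi$ for every $w\in W$. Comparing this with the reformulation of the left-hand side obtained above, the two conditions are literally identical, which establishes the desired equivalence $\mK\Vdash\uBox\varphi\Leftrightarrow\mK,W\models\varphi$.

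There is essentially no hard step here: the argument is a direct composition of the definition of $\uBox$ and Proposition~\ref{mlextends}, and both reductions are routine. The only point worth a moment's care is the passage from ``$\mK,w\Vdash\uBox\varphi$ for every $w$'' to the $w$-independent condition, where I would note explicitly that the truth of $\uBox\varphi$ at a point is insensitive to the choice of that point; non-emptiness of $W$ makes this transparent but is not even strictly needed, since for $W=\emptyset$ both sides would hold vacuously.
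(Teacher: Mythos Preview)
Your proposal is correct and follows essentially the same route as the paper: unfold $\mK\Vdash\uBox\varphi$ to the pointwise condition ``$\mK,v\Vdash\varphi$ for all $v\in W$'' and then invoke Proposition~\ref{mlextends} with $T=W$ to identify this with $\mK,W\models\varphi$. The only difference is that you spell out the collapse of the outer universal quantifier over $w$ explicitly, whereas the paper compresses that step into a single line.
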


\begin{proof}
By the semantics of $\uBox$, $\mK \Vdash \uBox \varphi$ iff $\mK, w \Vdash \varphi$ for every $w \in W$. Furthermore by Proposition \ref{mlextends}, $\mK, w \Vdash \varphi$ for every $w \in W$ iff $\mK, W \models \varphi$.
\end{proof}

\begin{lemma}
\label{vtoa}
For every $\ML(\idis)$-formula $\varphi$ there exists a formula $\varphi^-\in \bigvee \uBox\ML$ such that $\mK\models \varphi$ iff $\mK\Vdash \varphi^-$ for every Kripke model $\mK$. 
\end{lemma}

\begin{proof}
Let $\varphi$ be an arbitrary $\ML(\idis)$-formula. By Proposition \ref{disnormalform}, we may assume that
\(
\varphi = \psi_1\idis\cdots\idis\psi_n,
\)
for some $n\in \omega$ and $\psi_1,\dots,\psi_n\in \ML$. Let $\mK=(W,R,V)$ be an arbitrary model. It suffices to show $\mK\models \varphi \,\Leftrightarrow\,  \mK\Vdash \uBox\psi_1\ \vee\cdots\vee\uBox\psi_n$. 
This is shown as follows. 
\[
\begin{array}{rcl}
\mK\models \varphi &\quad\stackrel{\stackrel{\small\text{Def. of $\models$}}{\scriptsize \text{Proposition \ref{closures}}}}{\Leftrightarrow}\quad& \mK,W\models \psi_1\idis\cdots\idis\psi_n\\
&\quad\stackrel{\scriptsize \text{Def. of $\idis$}}{\Leftrightarrow}\quad& \text{There exists $i\leq n$: } \mK,W\models \psi_i \\
&\quad\stackrel{\scriptsize \text{Lemma \ref{kvalid_eq_tvalid}}}{\Leftrightarrow}\quad& \text{There exists $i\leq n$: } \mK \Vdash \uBox\psi_i \\
&\quad\stackrel{\scriptsize \text{Defs. of $\Vdash$, $\uBox$ and $\vee$}}{\Leftrightarrow}\quad& \mK\Vdash \uBox\psi_1\vee\cdots\vee\uBox\psi_n. 
\end{array}
\]
\end{proof}

\begin{lemma}
\label{elementarytoidis}
For every $\varphi\in\bigvee \uBox\ML$ there exists an $\ML(\idis)$-formula $\varphi^{*}$ such that $\mK\Vdash \varphi$ iff $\mK\models \varphi^*$ for every Kripke model $\mK$.
\end{lemma}

\begin{proof}
Let $\varphi\in\bigvee \uBox\ML$ be an arbitrary formula, i.e., $\varphi = \uBox\psi_1 \vee \dots \vee \uBox\psi_n$ for some $n\in \omega$ and $\psi_1,\dots,\psi_n\in \ML$. 
Let $\mK=(W,R,V)$ be an arbitrary Kripke model. It suffices to show $\mK\Vdash \varphi$ $\Leftrightarrow$ $\mK\models \psi_1 \idis \cdots\idis\psi_n$. 
We proceed as follows. 
\[
\begin{array}{rcl}
\mK\Vdash \uBox\psi_1 \vee \cdots \vee \uBox\psi_n &\quad\stackrel{\scriptsize \text{Defs. of $\Vdash$, $\uBox$, and $\vee$}}{\Leftrightarrow}\quad &\text{There exists $i \leq n$: } \mK \Vdash \uBox\psi_i\\
%
%
&\quad\stackrel{\scriptsize \text{Lemma \ref{kvalid_eq_tvalid}}}{\Leftrightarrow}\quad &\text{There exists $i\leq n$: } \mK,W\models \psi_i.\\
&\quad\stackrel{\scriptsize \text{Def. of $\idis$}}{\Leftrightarrow}\quad &\mK,W\models \psi_1 \idis\cdots\idis\psi_n\\
&\quad\stackrel{\scriptsize \text{Proposition \ref{closures}}}{\Leftrightarrow}\quad &\mK\models \psi_1 \idis\cdots\idis\psi_n.
\end{array}
\]
\end{proof}

\begin{theorem}\label{midis is uboxp}
A class $\cC$ of Kripke models 
is $\ML(\idis)$-definable iff it is $\ML(\uBoxp)$-definable. 
\end{theorem}
\begin{proof}
Let $\cC$ be a class of Kripke models. 
By Proposition \ref{prop:mlup_eq_vuml_m}, it suffices to show that $\cC$ is $\ML(\idis)$-definable iff it is $\bigvee\uBox\ML$-definable. ``If'' and ``Only If'' parts follow directly from Lemma \ref{vtoa} and Lemma \ref{elementarytoidis}, respectively.
\end{proof}

The following theorem then directly follows via Lemma \ref{eqlemma}.
\begin{theorem}
\label{idis is uboxp}
A frame class $\cF$ 
is $\ML(\idis)$-definable iff it is $\ML(\uBoxp)$-definable. 
\end{theorem}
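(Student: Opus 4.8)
The plan is to prove the two directions of the equivalence by translating formulae back and forth between $\ML(\idis)$ and $\ML(\uBoxp)$ in a way that preserves frame validity. The key observation to exploit is Proposition~\ref{mlextends}: for an ordinary $\ML$-formula $\psi$, the team-semantic statement $\mK,T\models\psi$ says exactly that $\psi$ holds at every point of $T$ under Kripke semantics. Consequently, frame validity of a team formula quantifies over all teams, and the maximal team is the whole domain $W$; this is precisely the situation captured by the universal modality $\uBox$. So I expect the correspondence to be: an $\ML(\uBoxp)$-formula $\uBox\psi$ (with $\psi\in\ML$) behaves, at the level of frame validity, like the intuitionistic-disjunction formula that forces $\psi$ to hold throughout the team, and the Boolean structure of $\uBox$-form clauses matches the structure of $\idis$.

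First I would set up the translation from $\ML(\uBoxp)$ to $\ML(\idis)$. Using Theorem~\ref{A+form}, I may assume every $\ML(\uBoxp)$-formula is in $\uBox$-form, i.e.\ a conjunction of disjunctive $\uBox$-clauses $\psi\vee\uBox\psi_1\vee\dots\vee\uBox\psi_n$ with each component in $\ML$. The plan is to show that each such clause defines the same frame class as an explicit $\ML(\idis)$-formula. The natural candidate is to replace each $\uBox\chi$ by a team formula whose frame validity expresses ``$\chi$ holds at every point''. Since frame validity already universally quantifies over teams and valuations, I would verify by a direct unwinding of the two semantics that the resulting $\ML(\idis)$-formula is valid in a frame $\fF$ under a valuation $V$ if and only if the original $\ML(\uBoxp)$-clause is; the intuitionistic disjunction $\idis$ supplies exactly the classical ``or'' over teams that the ordinary team disjunction $\lor$ does not, matching the disjunctive $\uBox$-clause shape.

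For the converse direction, from $\ML(\idis)$ to $\ML(\uBoxp)$, I would again aim for a validity-preserving translation. Here the leverage comes from downward closure (Proposition~\ref{dc}) together with Proposition~\ref{mlextends}: a downward-closed team property is determined by its behaviour on singleton teams and the full team, so the team quantification collapses to pointwise Kripke satisfaction plus a single global ($\uBox$-style) quantifier. The idea is to translate an $\idis$-disjunction $\varphi\idis\psi$ into a formula that, at the level of frames, asserts ``globally $\varphi$ or globally $\psi$'', which is expressible with $\uBox$ occurring positively. I would then push this translation through conjunction, ordinary disjunction, $\Box$, and $\Diamond$, checking that frame validity is preserved at each step.

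The main obstacle I anticipate is the interaction between the team connectives $\Box$, $\Diamond$, and $\lor$ and the global nature of $\uBox$. Unlike $\uBox$, the modalities in team semantics act locally on the team ($R[T]$ and the split/choice of successor teams), so a naive clause-by-clause translation need not commute with nesting. The crux will be to show that, \emph{for the purpose of frame definability} (validity under all valuations and all teams, equivalently under the full team by downward closure), these local team operations reduce to their pointwise Kripke counterparts, so that everything except the $\idis$/$\uBox$ layer collapses to ordinary $\ML$. Establishing this reduction carefully — i.e.\ that on frames the only genuinely ``global'' content of an $\ML(\idis)$-formula is carried by its intuitionistic disjunctions, and dually for the positive $\uBox$ occurrences — is where the real work lies; once it is in place, the two translations and the resulting equality of definable frame classes follow.
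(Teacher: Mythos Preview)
Your direction from $\ML(\uBoxp)$ to $\ML(\idis)$ is essentially the paper's: reduce to (closed) disjunctive $\uBox$-clauses via the $\uBox$-normal form and translate $\uBox\psi_1\vee\cdots\vee\uBox\psi_n$ to $\psi_1\idis\cdots\idis\psi_n$, using Proposition~\ref{mlextends} and downward closure to match model validity with satisfaction on the full team.

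The converse direction, however, has a real gap. You propose to ``push the translation through'' $\land$, $\lor$, $\Box$, $\Diamond$ while checking that \emph{frame validity} is preserved at each step. That does not work: frame validity (and model validity) is not compositional, so from ``$\varphi$ and $\varphi'$ define the same frame class'' you cannot conclude the same for $\Box\varphi$ and $\Box\varphi'$, or for $\varphi\lor\chi$ and $\varphi'\lor\chi$. Your diagnosis that ``for the purpose of frame definability'' the local team operations collapse to their Kripke counterparts is only correct at the \emph{outermost} level (on the full team $W$); inside a $\Box$, $\Diamond$, or split $\lor$ you are working on a proper sub-team, and the collapse is no longer available there. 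Concretely, trying to unwind $\mK,W\models\Box(\psi_1\idis\psi_2)$ directly lands you on the team $R[W]$, whose membership (``has a predecessor'') is not $\ML$-expressible.

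What the paper does instead is invoke the $\idis$-normal form (Proposition~\ref{disnormalform}): every $\ML(\idis)$-formula is \emph{team-semantically equivalent} (not merely frame-equivalent) to some $\psi_1\idis\cdots\idis\psi_n$ with each $\psi_i\in\ML$. This is exactly the ``reduction'' you identify as the crux, but it must be established at the level of $\equiv_T$; it amounts to the distribution laws $\Box(\alpha\idis\beta)\equiv_T\Box\alpha\idis\Box\beta$, $\Diamond(\alpha\idis\beta)\equiv_T\Diamond\alpha\idis\Diamond\beta$, $(\alpha\idis\beta)\lor\gamma\equiv_T(\alpha\lor\gamma)\idis(\beta\lor\gamma)$, etc., which let one push $\idis$ to the outside. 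Once you have the $\idis$-normal form, the translation to $\uBox\psi_1\vee\cdots\vee\uBox\psi_n$ is immediate (Lemma~\ref{vtoa}). So either cite Proposition~\ref{disnormalform}, or replace your inductive step by proving these distribution laws for $\equiv_T$; the frame-validity-level induction you sketch will not go through.
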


We are finally ready to combine our results concerning model and frame definability of team-based modal logics and modal logics with the universal modality.  
By Propositions \ref{ummorder} and \ref{prop:frame_def_ordering}, and Theorems \ref{thm:morder}, \ref{F:order}, \ref{midis is uboxp}, and \ref{idis is uboxp}, we obtain the following strict hierarchies.
\begin{theorem}
\label{thm:comp_frame_def}
With respect to model and frame definability, we have the following hierarchies:
\begin{align*}
&\{ \ML, \MINC, \EMINC \} \mle \MDL \mle  \{\EMDL, \ML(\varovee),  \ML(\uBox{}^+), \MTL \} \mle  \ML(\uBox)\\
&\{ \ML, \MINC, \EMINC \} \fle \{\MDL, \EMDL, \ML(\varovee), \ML(\uBox{}^+), \MTL \} \fle  \ML(\uBox).
\end{align*}
\end{theorem}

We can now extend the characterisations of model and frame definability (i.e., Theorems  \ref{MLmodels}, \ref{MLupmodels}, \ref{gbth}, \ref{GbTh4mlup}, \ref{fingbth}, and \ref{thm:GbTh4mlup}) to cover also team-based logics.

\begin{corollary}
For every logic $\LL\in\{\ML,\MINC,\EMINC\}$ and every class $\cC$ of Kripke models, the following are equivalent:
\begin{itemize}
\item[$(\mathrm{i})$] $\cC$ is $\LL$-definable.
\item[$(\mathrm{ii})$] $\cC$ is closed under surjective bisimulations, disjoint unions and ultraproducts, and $\overline{\cC}$ is closed under ultrapowers.
\item[$(\mathrm{iii})$] $\cC$ is elementary and closed under surjective bisimulations and disjoint unions.
\end{itemize}
\end{corollary}

\begin{corollary}
For every logic $\LL\in\{\ML(\uBoxp),\EMDL, \ML(\varovee), \MTL\}$ and every class $\cC$ of Kripke models, the following are equivalent:
\begin{itemize}
\item[$(\mathrm{i})$] $\cC$ is $\LL$-definable.
\item[$(\mathrm{ii})$] $\cC$ is closed under surjective bisimulations and ultraproducts, and $\overline{\mathbb{C}}$ is closed under ultrapowers.
\item[$(\mathrm{iii})$] $\cC$ is elementary and closed under surjective bisimulations.
\end{itemize}
\end{corollary}

\begin{corollary}
For every logic $\LL\in\{\ML,\MINC,\EMINC\}$ and for every elementary frame class $\cF$, the following are equivalent:
\begin{itemize}
\item[$(\mathrm{i})$] $\cF$ is $\LL$-definable.
\item[$(\mathrm{ii})$] $\cF$  is closed under taking bounded morphic images, generated subframes, disjoint unions and reflects ultrafilter extensions.
\end{itemize}
\end{corollary}

\begin{corollary}
For every logic $\LL\in\{\ML(\uBoxp),\MDL,\EMDL,\ML(\idis)\}$ and for every elementary frame class $\cF$, the following are equivalent:
\begin{itemize}
\item[$(\mathrm{i})$] $\cF$ is $\LL$-definable.
\item[$(\mathrm{ii})$] $\cF$ is closed under taking generated subframes and bounded morphic images, and reflects ultrafilter extensions and finitely generated subframes. 
\end{itemize}
\end{corollary}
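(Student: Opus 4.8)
The plan is to reduce all four cases to the single case $\LL = \ML(\uBoxp)$, for which the equivalence of (i) and (ii) is exactly Theorem \ref{thm:GbTh4mlup}. First I would recall that, by Definition \ref{def:ord_def}, the relation $\LL \feq \LL'$ holds precisely when every $\LL$-definable frame class is $\LL'$-definable and conversely; in other words, $\LL$ and $\LL'$ define exactly the same classes of frames. Thus $\feq$ transfers definability in both directions, and this is the only conceptual point that needs to be made explicit.

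The key observation is that Theorem \ref{thm:comp_frame_def} already establishes the chain $\MDL \feq \EMDL \feq \ML(\idis) \feq \ML(\uBoxp)$. Consequently, for each $\LL \in \{\MDL, \EMDL, \ML(\idis)\}$ and every frame class $\cF$ (elementary or not), $\cF$ is $\LL$-definable if and only if $\cF$ is $\ML(\uBoxp)$-definable. This is what lets the reduction go through uniformly for the three team-based logics.

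I would then finish as follows. Fix an arbitrary elementary frame class $\cF$ and let $\LL \in \{\ML(\uBoxp), \MDL, \EMDL, \ML(\idis)\}$. By the previous paragraph, condition (i), namely $\LL$-definability of $\cF$, is equivalent to $\ML(\uBoxp)$-definability of $\cF$ (trivially so when $\LL = \ML(\uBoxp)$). Applying Theorem \ref{thm:GbTh4mlup} to the elementary class $\cF$, the latter is in turn equivalent to condition (ii). Chaining these two equivalences yields (i) $\Leftrightarrow$ (ii) for every $\LL$ in the list.

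Since every step is a direct appeal to an already-proved statement, I do not anticipate any genuine obstacle: all the substantive work lies in Theorem \ref{thm:GbTh4mlup} (the Goldblatt--Thomason characterization for $\ML(\uBoxp)$) and in the frame-definability coincidences of Theorem \ref{thm:comp_frame_def}, which themselves rest on Theorem \ref{idis is uboxp} and Proposition \ref{trivialteam}. The corollary is therefore a routine combination; the only care required is to invoke $\feq$ in both directions so that the reduction to the $\ML(\uBoxp)$ case is legitimate for each of the three team-based logics.
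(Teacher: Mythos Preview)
Your proposal is correct and mirrors exactly the paper's intended argument: the corollary is stated immediately after Theorem \ref{thm:comp_frame_def} with no explicit proof, the understanding being that the $\feq$-chain reduces each case to $\ML(\uBoxp)$ and then Theorem \ref{thm:GbTh4mlup} applies. Your write-up simply makes this implicit reasoning explicit.
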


\begin{corollary}
For every logic $\LL\in\{\ML,\MINC,\EMINC\}$ and every class $\cF$ of finite transitive frames, the following are equivalent:
\begin{itemize}
\item[$(\mathrm{i})$] $\cF$ is $\LL$-definable within $\cF_{\mathrm{fintra}}$.
\item[$(\mathrm{ii})$] $\cF$  is closed under taking bounded morphic images, generated subframes, and disjoint unions.
\end{itemize}
\end{corollary}

\begin{corollary}
For every logic $\LL\in\{\ML(\uBoxp),\MDL,\EMDL,\ML(\idis)\}$ and every class $\cF$ of finite transitive frames, the following are equivalent:
\begin{itemize}
\item[$(\mathrm{i})$] $\cF$ is $\LL$-definable within $\cF_{\mathrm{fintra}}$.
\item[$(\mathrm{ii})$] $\cF$ is closed under taking generated subframes and bounded morphic images.
\end{itemize}
\end{corollary}

\section{Conclusion}
In this paper we studied model and frame definability of different modal logics. The first half of this article considered the extension of modal logic with the universal modality whereas the second half concetrated in modal logics with team semantics. We showed that with respect to model and frame definability a strict linear hierarchy between all of the logics studied here emerge, respectively. With respect to model definability we have four distinct cases, whereas in frame definability only three remain, see Theorem \ref{thm:comp_frame_def} for the hierarchies. Moreover, we gave model theoretic characterisations for model definabity (see Table \ref{table:models}) and frame definability; restricted to elementary classes and to the class of finite transitive frames (see Tables  \ref{table:frame1} and \ref{table:finframe}, respectively).

Note that our results imply that with respect to model definability every logic between $\EMDL$ and $\MTL$ coincide. Similarly, with respect to frame definability, every logic between $\MDL$ and $\MTL$ coincide. In particular, we obtain results concerning \emph{modal independence logic} $\MIL$ and \emph{extended modal independence logic} $\EMIL$ (for definitions see \cite{KMSV14}), since with respect to expressive power $\MDL\le \MIL\le\MTL$ and  $\EMDL\le \EMIL\le\MTL$.

We conclude with some open questions:
\begin{itemize}
\item Where does $\MIL$ lie with respect to model definability?
\item Is there some natural fragment of $\ML(\uBoxp)$ that coincides with $\MDL$ or $\MIL$ with respect to model definability?
\item Can we give model theoretic characterisations for model definability of $\MDL$ and $\MIL$?
\item  Can we use the notion of local bounded morphism (cf. \cite{Benthem1988}) to drop the requirement of transitivity from \Cref{thm:GbTh4mlup}?
\end{itemize}

\section*{References}

\bibliographystyle{plain}
\bibliography{tampere}

\end{document}